\def\Z{{\Bbb Z}}
\def\Q{{\Bbb Q}}    
\def\R{{\Bbb R}}
\def\C{{\Bbb C}}
\def\fb{{\mathfrak b}}
\def\fa{{\mathfrak a}}
\def\fp{{\mathfrak p}}
\def\fc{{\mathfrak c}}
\DeclareMathOperator{\ord}{ord}
\newtheorem{theorem}{Theorem}[section]
\newtheorem{lemma}[theorem]{Lemma}
\newtheorem{prop}[theorem]{Proposition}
\newtheorem{definition}[theorem]{Definition}
\newtheorem{remark}[theorem]{Remark}
\begin{document}
\title[genus character $L$-functions]
{Genus character $L$-functions of 
quadratic orders in an adelic way and maximal orders of 
matrix algebras}
\author{Tomoyoshi Ibukiyama}
\address{Department of Mathematics, Graduate School of Mathematics, Osaka University,
Machikaneyama 1-1, Toyonaka,Osaka, 560-0043 Japan}
\email{ibukiyam@math.sci.osaka-u.ac.jp}
\thanks{This work was supported by JSPS Kakenhi Grant Number JP19K03424 
and JP	20H00115}
\keywords{$L$-functions, genus theory, quadratic field, }
\subjclass[2020]{11R11, 11R37, 11R42}

\maketitle

\begin{abstract}
For a quadratic extension $K$ of $\Q$, we consider orders $O$ in $K$ 
that are not necessarily maximal and 
the ideal class group 
$Cl^+(O)$ in the narrow sense of proper ideals of $O$. 
Characters of $Cl^+(O)$  of order at most two are 
traditionally called genus characters. Explicit description of such characters 
is known classically, but explicit $L$-functions associated 
to those characters are only recently obtained partially by Chinta and Offen and completely by Kaneko and Mizuno.
As remarked in the latter paper, the present author 
also obtained the formula of such $L$-functions independently.
Indeed, here we will give a simple and transparent alternative proof of the formula 
by rewriting explicit genus characters and their values   
in an adelic way starting from scratch. 
We also add 
an explicit formula for the genus number in the wide sense, 
which is maybe known but rarely treated. As an appendix 
we give an ideal-theoretic characterization of 
isomorphism classes of maximal orders of the matrix algebras $M_n(F)$ 
over a number field $F$ up to 
$GL_n(F)$ and $GL_n^+(F)$ conjugation respectively, and 
apply genus numbers to count them 
when $n=2$ and $F$ is quadratic.
To avoid any misconception, we include some easy known details. 
\end{abstract}

\section{Introduction}
The purpose of the paper is to 
give an alternative proof of the formula for the genus 
character $L$-functions associated with 
not necessarily maximal orders of quadratic fields
given in \cite{chinta} except for some cases and 
in \cite{kanekomizuno} for all the cases.
For that purpose, we describe proper ideals 
of non-maximal quadratic orders, 
their genus characters, and their values at ideals 
all explicitly in an adelic way.
A global description of such objects is 
a classical result (see for example
\cite{dirichlet} or \cite{weber}), but results on 
the $L$-function are obtained recently (see \cite{chinta}, \cite{kanekomizuno}).
Anyway, this paper is more or less 
expository in nature, and our new point here is to 
treat everything adelically. 
Almost from scratch except for an easy part of 
the class field theory, we give explicit formulas of 
genus characters and $L$-functions associated with it.
This would give more transparent view to the theory. 
When the order is not maximal, we do not know 
a reference treating this subject in this way, so 
we believe it is not useless to publish this. 
We also add a formula for the genus number in 
the wide sense, which has application to conjugacy classes 
of maximal orders of $2\times 2$ matrix algebras.
Indeed in the final section, for general $n$ and algebraic number fields $F$, 
we consider ideal theoretic 
characterization of the number of maximal orders of $M_n(F)$ 
up to $GL_n(F)$ conjugation and $GL_n^+(F)$ conjugation, where
$GL_n^+(F)$ means those with totally positive determinant.
The result for $GL_n(F)$ conjugation has been known in \cite{arima}.

The paper is outlined as follows.
In the next section, we review 
the theory of genus of cyclic extensions $K$ of 
$\Q$ for orders $O$ of $K$ not necessarily maximal.
This is a minor generalization of \cite{iyanagatamagawa}, where the 
case of maximal orders is treated.
In section $3$, we assume that $K$ is quadratic, and 
explicitly describe proper ideals of non-maximal orders, 
the adelic subgroup corresponding to the genus, and genus characters. 
Then give formulas of values of genus characters at ideals. 
In section 4, we give an explicit formula of $L$-functions associated to  genus characters (see Theorem \ref{main}). 
In section 5, we give a formula for the genus number in the wide sense. 
In section 6, we give a general theory on the number of maximal orders
of the matrix algebras over an algebraic number field, and 
for a quadratic field $K$, we give an application of the genus numbers 
in the wide sense and in the narrow sense 
to the number of $GL_2(K)$ and $GL_2^+(K)$ conjugacy classes of 
maximal orders in $M_2(K)$. 

In this paper, we do not explain the relation to quadratic forms.
For this important part of the history, 
see \cite{dirichlet}, \cite{weber}, 
\cite{cassels} or \cite{arakawaibukaneko} for example. 
In fact, this paper would be read as an appendix to (the Japanese version of)
\cite{arakawaibukaneko}, where everything was treated globally. 
For some old history of the genus theory, see \cite{lemmermeyer}, 
and further generalization of the notion of the 
genus, see \cite{mhorie} and the references there.

\section{Definition of a genus for cyclic extensions 
over $\Q$}
Our main concern is a quadratic order, but in this section we review 
the genus theory of cyclic extensions of $\Q$ based on 
\cite{iyanagatamagawa}, since it would make 
our points clearer.
Here the only difference from \cite{iyanagatamagawa}
is that we describe the theory for 
orders $O$ not necessarily maximal.
For reader's convenience, we repeat some arguments there.
 
Let $V$ be any finite dimensional vector space over 
$\Q$. 
A free $\Z$ submodule $L$ of $V$ is said to be  
a lattice if it contains a basis of $V$ over $\Q$.
When $K$ is an algebraic number field, 
regarding $K$ as a vector space over $\Q$, 
a subring $O$ of $K$ that contains $1$ and is a lattice 
of $K$  is called an order of $K$.
It is clear that any element of $O$ is an algebraic 
integer, so $O\subset O_{max}$, where $O_{max}$ is 
the maximal order of $K$. 
While $O_{max}$ is a Dedekind domain, the order 
$O\subsetneqq O_{max}$ is not Dedekind since 
it is not integrally closed. So there is no 
prime ideal decomposition of ideals of $O$. 
Besides, for an ideal $\fa$ of $O$, there is 
no inverse ideal in general.  This means that if we want to 
define ideal classes, we must restrict ideals 
to a smaller set of ideals of $O$. According to the usual habit,  
a lattice $L$ of $K$ with $OL\subset L$ is 
called a fractional ideal of $O$. If $L\subset O$ besides,
we say that $L$ is an integral ideal, or just an ideal 
of $O$.
We write $V_p=V\otimes_{\Z}\Q_p$ for any prime $p$ where $\Q_p$ is 
the field of $p$-adic numbers.
For any submodule $L$ of $V$ and a prime 
$p$, we write $L_p=L\otimes_{\Z}\Z_p\subset V_p$, where 
$\Z_p$ is the ring of $p$-adic integers.

\begin{definition}
We say that a fractional ideal $\fa$ of $O$ is locally principal 
if $\fa_p=O_pa_p$ for some $a_p\in K_p$ for 
every prime $p$.
\end{definition}

Here we note that $K_p=K\otimes_{q}\Q_p$ is a direct sum of 
fields according to the decomposition of $p$ in $K$ 
and not a field in general. The ring $O_p$ is not necessarily
decomposed into a direct sum of orders of the fields, 
and it is not suitable to consider each place of $K$ over $p$ separately
when $O$ is not maximal.
The relation between $\fa$ and the collection of $\fa_p$
for any $p$ is given by the proposition given below.
Any $\Z_p$ submodule $L$ of $V_p$ 
is called a lattice of $V_p$ if 
$L=\Z_p\omega_1+\cdots+\Z_p\omega_n$ for some 
basis $\{\omega_1,\ldots,\omega_n\}$ of $V_p$ over 
$\Q_p$.

\begin{prop}
Notation being as above, 
let $\{N_p\}_{p:prime}$ be a family of lattices in $V_p$
and $L$ be a lattice in $V$. 
Assume that $L_p=N_p$ for almost all $p$. Then there 
exists a lattice $M$ in $V$ such that $N_p=M\otimes_{\Z}\Z_p$ and 
\[
M=\bigcap_{p:prime}(V\cap N_p).
\]
\end{prop}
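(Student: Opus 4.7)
The plan is to take $M := \bigcap_p (V \cap N_p)$ as specified by the formula and verify the two assertions in turn: first that $M$ is a lattice in $V$, by sandwiching it between two commensurable scalar multiples of $L$, and then that $M \otimes_\Z \Z_p = N_p$ for every $p$, where the nontrivial direction relies on simultaneous approximation. I will use silently the elementary local--global principle for a single lattice: if $L$ is a lattice in $V$, then $L = \bigcap_p (V \cap L_p)$ (expand a would-be element in a $\Z$-basis of $L$ and use $\Z = \bigcap_p \Z_p$).

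For the sandwich, let $S$ be the finite set of primes at which $L_p \ne N_p$. For each $p \in S$, both $L_p$ and $N_p$ are $\Z_p$-lattices in the common space $V_p$, so an integer $n_p \ge 0$ exists with $p^{n_p} L_p \subset N_p \subset p^{-n_p} L_p$. Put $d = \prod_{p \in S} p^{n_p}$. Then at every prime $q$ one has $dL_q \subset N_q$ (for $q \in S$ by construction, for $q \notin S$ because $d \in \Z_q^\times$ and $L_q = N_q$) and symmetrically $dN_q \subset L_q$. The first inclusion gives $dL \subset M$ directly. The second, applied element by element to $m \in M$, yields $dm \in L_q$ at every $q$, hence $dm \in L$ by the local--global principle, i.e.\ $M \subset d^{-1}L$. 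Being sandwiched between two lattices in $V$, $M$ is itself a lattice.

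For the localization, the inclusion $M \otimes_\Z \Z_p \subset N_p$ follows at once from the definition of $M$. The reverse inclusion is the heart of the argument. Given $x \in N_p$, I would produce for each $k \ge 1$ an element $y \in M$ with $y - x \in p^k N_p$. After fixing a $\Z$-basis of $L$ to identify $V$ with $\Q^n$ and $L$ with $\Z^n$, this becomes a coordinatewise problem: by the Chinese remainder theorem the diagonal map
\[
\Z \longrightarrow \prod_{q \in S \cup \{p\}} \Z_q/q^{m_q}\Z_q
\]
is surjective for any choice of exponents $m_q$, and clearing a suitable denominator supported on $S \cup \{p\}$ then allows me to solve simultaneously $y \equiv x \pmod{p^k N_p}$ and $y \in N_q$ for each $q \in S \setminus \{p\}$, while $y \in \Z_q^n = L_q = N_q$ is automatic at the remaining $q$. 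This gives $N_p = M_p + p^k N_p$; since $N_p/M_p$ is a finitely generated $\Z_p$-module (a subquotient of the finite module $d^{-1}L_p/dL_p$), Nakayama's lemma upgrades this to $N_p = M_p$.

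The main technical obstacle is the simultaneous approximation in the last paragraph: one must produce a single element of $V$ that $p$-adically approximates a prescribed vector while lying inside prescribed lattices at each of the other primes of $S$. Once transcribed in coordinates via a basis of $L$, this reduces to classical CRT, and the rest of the argument is bookkeeping around the scaling factor $d$.
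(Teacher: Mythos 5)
Your argument is correct. Note first that the paper does not actually prove this proposition: it simply cites Weil, \emph{Basic Number Theory}, p.~84, Theorem~2, so any self-contained proof is ``different from the paper'' by default. Your route --- defining $M=\bigcap_p(V\cap N_p)$, sandwiching $dL\subset M\subset d^{-1}L$ with $d$ supported on the finite exceptional set $S$ to see that $M$ is a lattice, and then proving $M_p=N_p$ by CRT-based simultaneous approximation ($N_p=M_p+p^kN_p$ for all $k$) followed by Nakayama --- is sound. The two points worth making explicit if you write this up: (i) the containment $M\subset d^{-1}L$ needs the elementary local--global identity $L=\bigcap_q(V\cap L_q)$, which you correctly flag and which follows from $\Z=\bigcap_q\Z_q$ in a $\Z$-basis of $L$; (ii) in the approximation step the phrase ``clearing a suitable denominator'' is doing real work: one should first split off the $T$-supported fractional parts of $x$ (and replace each $N_q$ by a standard sublattice $q^{n_q}\Z_q^n$) so that the remaining congruences are genuinely integral and classical CRT applies coordinatewise. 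Weil's own proof is organized differently (he changes the lattice one prime at a time and invokes his Theorem~1 on $V\cap L_v$), whereas your version handles all exceptional primes at once; the Nakayama finish could even be replaced by noting that $N_p/M_p$ is finite and equals its own multiplication by $p$. Either way the statement is established, and your proof could stand in the paper as a substitute for the citation.
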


For the proof, 
see Weil \cite{weilbook} p.84 Theorem 2. 

We denote by $K_A^{\times}$ the group of ideles of $K$.
For any element $a=(a_v)\in K_A^{\times}$, 
we may define a locally principal fractional 
ideal $\fa$ of $O$ by
\[
\fa=\bigcap_{p:prime}(a_pO_p\cap K), \qquad (a_p=(a_v)_{v|p}\in K_p).
\]
So locally principal fractional ideals correspond
with $K_{A,fin}^{\times}/\prod_{p}O_p^{\times}$, where
$K_{A,fin}^{\times}$ is the finite part of the ideles, i.e. the group of 
ideles whose components at infinite places are all $1$. 
For a locally principal fractional ideal $\fa$ of $O$ as above, we may 
define an inverse ideal by 
\[
\fa^{-1}=\bigcap_{p:prime}(a_p^{-1}O_p\cap K).
\]
Then we have $\fa\fa^{-1}=O$ and 
locally principal fractional ideals of $O$ form a group.
We say that locally principal 
fractional ideals $\fa$ and $\fb$ are 
equivalent in the wide sense if $\fb=\fa\alpha$ for some 
$\alpha\in K^{\times}$. Equivalence in the narrow sense is 
defined by imposing a condition that $\alpha\in 
K_+^{\times}$, where $K_+$ is the set of totally positive 
elements $\alpha$ of $K$, that is, $\alpha$ is 
positive under embeddings of $K$ into the real field at all infinite 
real places and no condition at complex places.
We denote by $Cl(O)$ (resp. $Cl^+(O)$) the group of 
classes of locally principal fractional ideals in the wide sense 
(resp. in the narrow sense).
We will mainly consider $Cl^+(O)$.
As usual, $K^{\times}$ is diagonally embedded in 
$K_A^{\times}$, and we denote the image by the same 
letter $K^{\times}$.
Let $r_1$ and $r_2$ be the number of real places and 
complex places of $K$, respectively.
We put 
\[
U_{\infty}=(\R^{\times})^{r_1}\times (C^{\times})^{r_2}
\text{ and }
U_{\infty,+}=(\R_+^{\times})^{r_1}\times (\C^{\times})^{r_2},
\]
where $\R_+^{\times}$ is the set of positive real 
numbers. 
(If $K/\Q$ is Galois, we have $r_2=0$ if $K\subset \R$ 
and $r_1=0$ if not.)
We put $U(O)=U_{\infty}\prod_{p:prime}O_p^{\times}$ and 
$U_+(O)=U_{\infty,+}\prod_{p:prime}O_p^{\times}$.
Then we have 
\begin{align*}
Cl(O) & \cong K_A^{\times}/K^{\times}U(O),\\
Cl^+(O) & \cong K_A^{\times}/K^{\times}U_{+}(O)
\cong U_{\infty,+}K_{A,fin}/K_+^{\times}U_+(O).
\end{align*}
The last isomorphism comes from the fact that $K$
contains elements of any signature at infinite places.

Here we shortly review the class field theory over $\Q$.
\begin{lemma}[Class field theory]
The set of abelian extensions $K$ of $\Q$ 
in the algebraic closure of $\Q$ is bijective 
to the set of finite index subgroups $H$ of $\Q_A^{\times}$ containing
$\Q^{\times}$. Here, denoting by $N_{K/\Q}$ the norm from $K$ to $\Q$ and 
by $Gal(K/\Q)$ the Galois group of $K$ over $\Q$,  
the correspondence is given by 
\[
H=\Q^{\times}N_{K/\Q}(K_A^{\times}), \qquad Gal(K/\Q)\cong \Q_A^{\times}/H.
\] 
\end{lemma}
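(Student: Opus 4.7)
The plan is to reduce the statement to the Kronecker--Weber theorem combined with an explicit description of the idele class group of $\Q$. First I would establish the topological decomposition
\[
\Q_A^{\times}=\Q^{\times}\cdot\Bigl(\R_+^{\times}\times\prod_{p}\Z_p^{\times}\Bigr),
\]
which follows because any idele can be adjusted by a suitable element of $\Q^{\times}$, using unique factorization in $\Z$ at the finite places and the sign at the real place. This gives $\Q_A^{\times}/\Q^{\times}\cong \R_+^{\times}\times\hat{\Z}^{\times}$, where $\hat{\Z}^{\times}=\prod_p \Z_p^{\times}$.

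Second, since $\R_+^{\times}$ is divisible it has no proper subgroup of finite index; hence every finite index subgroup $H\subset \Q_A^{\times}$ containing $\Q^{\times}$ must contain $\R_+^{\times}$ and is determined by its image $\overline{H}$ in $\hat{\Z}^{\times}$. Such images are precisely the open subgroups of finite index, which by the isomorphism $\hat{\Z}^{\times}\cong\varprojlim_n (\Z/n\Z)^{\times}$ correspond bijectively to quotients of $(\Z/n\Z)^{\times}$ as $n$ varies.

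Third, I would invoke the Kronecker--Weber theorem: every finite abelian extension $K/\Q$ is contained in some cyclotomic field $\Q(\zeta_n)$, and $Gal(\Q(\zeta_n)/\Q)\cong(\Z/n\Z)^{\times}$. Under the Galois correspondence, intermediate fields between $\Q$ and $\Q(\zeta_n)$ match subgroups of $(\Z/n\Z)^{\times}$, which by the previous step match finite index subgroups of $\Q_A^{\times}$ containing $\Q^{\times}$. Passing to the direct limit in $n$ yields the asserted bijection, together with the Galois group identification $Gal(K/\Q)\cong\Q_A^{\times}/H$.

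Fourth, I would check that the subgroup $H$ just constructed coincides with $\Q^{\times}N_{K/\Q}(K_A^{\times})$. It suffices to do this in the cyclotomic case, where at a prime $p\nmid n$ the local norm from any place of $\Q(\zeta_n)$ above $p$ sends a uniformizer to $p^{f}$ with $f$ the residue degree, and the image of $p$ in $(\Z/n\Z)^{\times}$ realizes the Frobenius; combined with a comparison of the global indices $[\Q_A^{\times}:\Q^{\times}N_{K/\Q}K_A^{\times}]=[K:\Q]$ this pins down the norm subgroup.

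The main obstacle is not any one calculation but the fact that ``class field theory over $\Q$'' is being invoked as a black box; the real content is the Kronecker--Weber theorem, and the delicate step is the reciprocity identification in the fourth paragraph matching the abstract subgroup coming from Galois theory with the concrete norm subgroup. Since the author labels this a review lemma, I would likely only sketch the above chain and refer to a standard source for the reciprocity verification.
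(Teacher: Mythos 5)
The paper offers no proof of this lemma: it is quoted as the statement of class field theory over $\Q$, and the discussion immediately following it (the direct product decomposition $\Q_A^{\times}=\Q^{\times}\times\R_+^{\times}\times\prod_p\Z_p^{\times}$, the identification of the Galois group of the maximal abelian extension with $\prod_p\Z_p^{\times}=\varprojlim_N(\Z/N\Z)^{\times}$, and the normal form $H=\Q^{\times}\times(\R_+^{\times}\times H_0)$) is exactly the framework your first two steps reconstruct. So your sketch is not a different route; it is the standard proof of the fact being cited, with Kronecker--Weber supplying the existence theorem and the cyclotomic norm computation supplying reciprocity, and your identification of the fourth step as the delicate point is accurate.

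There is, however, one genuine gap in your second step: it is not true that every finite-index subgroup of $\Q_A^{\times}$ containing $\Q^{\times}$ has \emph{open} image in $\prod_p\Z_p^{\times}$. Indeed $\prod_p\Z_p^{\times}$ surjects continuously onto the $\F_2$-vector space $\prod_{p>2}\{\pm1\}$, which has uncountably many index-two subgroups but only countably many open ones; pulling back a non-open hyperplane produces a finite-index subgroup of $\Q_A^{\times}$ containing $\Q^{\times}$ that corresponds to no abelian extension. The bijection therefore holds only for open (equivalently, for finite index, closed) subgroups; this hypothesis is also missing from the lemma as stated in the paper, but your argument silently asserts openness at the point where it must be assumed. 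A smaller caveat on step four: the index equality $[\Q_A^{\times}:\Q^{\times}N_{K/\Q}K_A^{\times}]=[K:\Q]$ is itself part of what is being proved, so to avoid circularity you should obtain it by direct computation of the cyclotomic norm groups (including the ramified local norm groups $\langle p\rangle\times(1+p^{k}\Z_p)$ for $\Q_p(\zeta_{p^{k}})/\Q_p$, which your sketch omits), and for a general abelian $K\subset\Q(\zeta_n)$ you still need transitivity of norms plus an index count for $K$ itself rather than only for $\Q(\zeta_n)$.
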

The following direct product decomposition is well known and easy to see.
\begin{equation}\label{direct}
\Q_A^{\times}=\Q^{\times}\times \R_+^{\times}
\times \prod_{p:prime}\Z_p^{\times}.
\end{equation}
The Galois group of the maximal abelian extension of $\Q$
(that is, the union of all cyclotomic fields)
is given by $\prod_{p}\Z_p^{\times}=\varprojlim\limits_{N}
(\Z/N\Z)^{\times}$.
The relation of this fact to the class field theory is 
as follows. By the direct product \eqref{direct}, we see that 
any $H$ in the lemma can be written as  
\[
H=\Q^{\times}\times (\R_+^{\times}\times H_0),  
\qquad H_0\subset 
\prod_{p:prime}\Z_p^{\times}.
\]
Then we see that  
\[
\Q_A^{\times}/H\cong \left(\prod_{p}\Z_p^{\times}\right)/H_0.
\]
Of course $H_0$ is in general bigger than
$\prod_{p:prime}(H_0\cap \Z_p^{\times})$, 
where $\Z_p^{\times}$ is identified with 
the subset of $\Q_A^{\times}$ whose components 
at places $v\neq p$ are all $1$ while components at $p$ are in $\Z_p^{\times}$.
If we write 
\[
e_p=[\Z_p^{\times};\Z_p^{\times}\cap H_0],
\]
then $e_p$ is the ramification index of $p$ in $K$.
Indeed, for $a \in \Q^{\times}N_{K/\Q}K_A^{\times}$ written as 
$a=cu_{\infty}u_0$ with $c\in \Q^{\times}$, $u_{\infty}\in \R_+^{\times}$
and $u_0=(u_{0,q})\in H_0$, assume that $u_{0,q}=1$ unless $q\neq p$.
Denote by $\theta$ the reciprocity map of $\Q_A^{\times}$ to 
$Gal(K/\Q)$, Then we have 
$\theta(b)=\prod_{v}\theta_v(b_v)$ for any $b=(b_v)\in \Q^{\times}$,
where $\theta_v$ is the reciprocity map from $\Q_v^{\times}$ to
$Gal(K_w/\Q_v)$ for any place $w$ of $K$ over a place $v$. 
So we have $\theta(u_0)=\theta_p(u_{0,p})$.
But since $\theta(a)=\theta(c)=\theta(u_{\infty})=1$ by definition,
we have $\theta_p(u_{0,p})=1$. This means $u_{0,p}\in N_{K_{\fp}/\Q_p}(K_v^{\times})$
where $\fp$ is any prime of $K$ over $p$.
So $e_p=[\Z_p^{\times}:\Z_p^{\times}\cap N_{K_{\fp}/\Q_p}(K_{\fp}^{\times})]$. 
Here $e_p=1$ for almost all $p$.

In particular, if $K$ is cyclic over $\Q$, 
then $[K:\Q]$ is the least common multiple 
$n$ of $e_p$ defined above. 
Indeed, take a character $\chi_K$ of $\prod_q\Z_q^{\times}$ 
such that $Ker(\chi_K)=H_0$.
If we decompose $\chi_K$ as 
$\chi_K=\prod_p\chi_{K,p}$ by characters 
$\chi_{K,p}$ on $\Z_q^{\times}/(\Z_q^{\times}\cap H_0)$,
then $\chi_{K,p}$ is of order $e_p$, and the order of 
$\chi_K$ is $n$.  

From here until the end of this section, 
we assume that $K$ is a cyclic extension of $\Q$.
We fix a generator $\sigma$ of $Gal(K/\Q)$. 
For an order $O$ of $K$ which is not necessarily maximal, we put 
\[
U_+(O)=U_{\infty,+}\prod_{p}O_p^{\times}.
\]
To define a genus of $O$, we prepare the 
following proposition.

\begin{prop}\label{iyanagatamagawa}
Notation being as above, for $a \in K_A^{\times}$, 
the following conditions (1) and (2) are equivalent.
\\
(1) $N_{K/\Q}(a)\in \Q^{\times}N_{K/\Q}(U_+(O))$. \\
(2) There exists $b\in K_A^{\times}$, $u\in U_+(O)$ and 
$c\in K^{\times}$ such that 
\[
a=b^{1-\sigma}uc.
\]
\end{prop}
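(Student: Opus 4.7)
The plan is to dispatch $(2) \Rightarrow (1)$ by a direct norm computation, and to handle the substantive direction $(1) \Rightarrow (2)$ by combining the Hasse norm theorem with Hilbert 90 for ideles; both inputs are available precisely because $K/\Q$ is cyclic.

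First, for $(2) \Rightarrow (1)$, I would apply $N_{K/\Q}$ to the identity $a = b^{1-\sigma} u c$. Since the norm is $Gal(K/\Q)$-invariant, $N_{K/\Q}(b^{1-\sigma}) = N_{K/\Q}(b) \cdot N_{K/\Q}(b^\sigma)^{-1} = 1$. Hence $N_{K/\Q}(a) = N_{K/\Q}(u) \cdot N_{K/\Q}(c)$, which lies in $\Q^{\times} N_{K/\Q}(U_+(O))$ because $c \in K^\times$.

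For the converse, start from $N_{K/\Q}(a) = r \cdot N_{K/\Q}(u)$ with $r \in \Q^\times$ and $u \in U_+(O)$, so $r = N_{K/\Q}(au^{-1})$. Because $r$ is the global norm of an idele, it is locally a norm from $\prod_{w \mid v} K_w^\times$ to $\Q_v^\times$ at every place $v$ of $\Q$. The Hasse norm theorem for the cyclic extension $K/\Q$ then produces $c \in K^\times$ with $N_{K/\Q}(c) = r$. Put $a' = au^{-1}c^{-1}$, so that $N_{K/\Q}(a') = 1$. Hilbert 90 for ideles in a cyclic extension, which follows from local Hilbert 90 applied to the induced $Gal(K/\Q)$-modules $\prod_{w \mid v} K_w^\times$ via Shapiro's lemma, supplies $b \in K_A^\times$ with $a' = b^{1-\sigma}$. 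Rearranging yields $a = b^{1-\sigma} u c$, as required.

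The main obstacle is the Hasse norm step: cyclicity of $K/\Q$ is indispensable there, since the statement already fails for biquadratic extensions, and it is exactly this ingredient that converts the weak condition on $N_{K/\Q}(a)$ into the existence of a principal $c \in K^\times$ absorbing the $\Q^\times$ part. The Hilbert 90 input, once a norm-one idele has been isolated, is then routine.
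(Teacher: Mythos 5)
Your proof is correct and follows essentially the same route as the paper: the trivial direction by taking norms, then for $(1)\Rightarrow(2)$ reducing to $N_{K/\Q}(au^{-1})\in\Q^{\times}$, invoking the Hasse norm theorem for the cyclic extension to produce $c\in K^{\times}$, and finishing with an idelic Hilbert 90 applied to the norm-one idele $au^{-1}c^{-1}$. The only difference is in how that last lemma is justified: the paper proves the idelic Satz 90 by an explicit component-wise construction at each prime (including the check that the resulting $b$ is integral at almost all places, i.e.\ the unramified local statement), whereas you cite the cohomological reduction via Shapiro's lemma --- both are valid, and your version implicitly needs the same vanishing of $\hat{H}^{-1}$ for unramified local units to ensure $b$ is genuinely an idele.
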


\begin{proof}
This is essentially Theorem 3 in \cite{iyanagatamagawa}
except for the point that we do not assume that $O$ is 
maximal. It is trivial that (2) implies (1). So we prove (2) 
assuming (1). First of all, we give an idelic version of 
Hilbert Satz 90 stated as follows: 

For any $a \in K_A^{\times}$
with $N_{K/\Q}(a)=1$, 
there exists $b\in K_A^{\times}$ such that 
$a=b^{1-\sigma}$. 

This is claimed in \cite{iyanagatamagawa} without proof, so we give 
here a proof. We have 
$K_p=F\oplus \cdots \oplus F$ 
for some field $F$ over $\Q_p$ 
(isomorphic to the completion of $K$ at any place of $K$ over $p$). 
Let $\fp$ be a prime ideal in $K$ over $p$ and 
$\tau=\sigma^m$ a generator the decomposition group of $\fp$.
Each component of $K_p$ corresponds with 
the embedding associated to $\fp^{\sigma^i}$ with some  
$i\in \{0, \ldots, m-1\}$. 
For $x=(x_1,\ldots,x_m)\in F^m=K_p$, 
we may regard 
\[
x^\sigma= (x_m^{\tau},x_1,x_2\ldots,x_{m-1}).
\]
So we have 
\[
x^{1-\sigma}=(x_1/x_m^{\tau},x_2/x_1,\ldots,x_{m}/x_{m-1}).
\]
For $y=(y_1,\ldots,y_m) \in F^m=K_p$, we have 
\[
N_{K/\Q}(y)=N_{F/\Q}(y_1\cdots y_m).
\]
The condition that 
\[
y=x^{1-\sigma}
\]
is 
\[
y_1=x_1/x_m^{\tau}, \quad y_2=x_2/x_1,\quad \ldots,\quad y_{m}=x_{m}/x_{m-1}.
\]
so $y_1\cdots y_m=x_{m}^{1-\tau}$.
Since we assumed 
$N_{K/\Q}(y)=N_{F/\Q}(y_1\cdots y_m)=1$, 
there exists $x_0\in F^{\times}$
such that $y_1\cdots y_m=x_0^{1-\tau}$ by the usual Hilbert Satz 90
for cyclic extensions. 
If we put $x_m=x_0$ and define $x_i$ inductively by 
$x_{i}=x_{i+1}y_{i+1}^{-1}$ for any $1\leq i\leq m-1$, then 
for $x=(x_1,\ldots,x_m)$, we have 
$y=x^{1-\sigma}$. Now we must show that 
$x$ is in $K_A^{\times}$. For almost all primes $p$, 
we have $O_p=O_{max,p}$ and $p$ is unramified in $K$. 
For such $p$, any element of the maximal order $O_F$ of $F$ is written as
$x_0=p^e\epsilon$ for some $\epsilon\in O_F^{\times}$.
Since $x_0^{1-\tau}=\epsilon^{1-\tau}$, we may take 
$x_0=\epsilon$. By definition of ideles, we have 
$y_i\in O_F^{\times}$
for almost all $p$, so $x_i=x_{i+1}y_{i+1}^{-1}$ is 
also in $O_F^{\times}$. So 
for almost all $p$, we may assume $y=x^{1-\sigma}$
for $x \in O_p^{\times}$. This means that 
for any $a \in K_A^{\times}$ with $N_{K/\Q}(a)=1$, 
we have $a=b^{1-\sigma}$ for some 
$b\in K_A^{\times}$. So the idelic version of 
Satz 90 is proved. Now assume (1) in Proposition 
for $a \in K_A^{\times}$. 
Then we have $N_{K/\Q}(au^{-1})\in \Q^{\times}$
for some $u \in U_+(O)$. For a cyclic extension, by the 
Hasse norm theorem, an element of 
$\Q$ is a local norm if and only if it is a global norm
(e.g. \cite{iyanagatamagawa} quoted \cite{chevalley}),
so we have $c \in K^{\times}$ such that 
$N_{K/\Q}(au^{-1}c^{-1})=1$. So 
we have $au^{-1}c^{-1}=b^{1-\sigma}$ for some $b\in 
K_A^{\times}$. 
\end{proof}

\begin{definition}
The subgroup of $H(O)$ 
of elements of $K_A^{\times}$ 
that satisfy (1) and (2) is said to be 
a principal genus of $O$. 
A coset in $K_A^{\times}/H(O)$ 
is called a genus of $O$.
We call the number of these cosets a genus number
in the narrow sense.
\end{definition}

More classical explanation is given as follows.
As we have explained, we have 
\[
Cl^+(O)\cong K_A^{\times}/K^{\times}U_+(O)\cong U_{\infty,+}K_{A,fin}^{\times}/
K_+^{\times}U_+(O).
\]
Here by definition we have
\[
K^{\times}U_+(O) \subset H(O)\subset K_A^{\times}, 
\]
so $H(O)/K^{\times}U_+(O)$ is a subgroup of 
$Cl^+(O)\cong K_A^{\times}/K^{\times}U_+(O)$. 
Elements in this subgroup are called 
the ^^ ^^ principal genus classes" in the narrow 
sense and a genus is a coset of ideal classes in the narrow sense
divided by these classes.
(When $K$ is quadratic, obviously 
the principal genus classes consists of square classes
by the condition (2) above.
The purpose of Proposition \ref{iyanagatamagawa} is 
to compare the condition (1) with 
the classical setting. For non-cyclic extensions, 
only the condition (1) is often used 
for the definition of the principal genus classes.
See for example \cite{mhorie}.)
 A character of
the group $K_A^{\times}/K^{\times}U_+(O)\cong Cl^+(O)$ which is 
trivial on $H(O)/K^{\times}U_+(O)$ is called 
a genus character.

If we consider the map 
\[
K_A^{\times}\stackrel{N_{K/\Q}}{\longrightarrow}
\Q_A^{\times}\longrightarrow \Q_A^{\times}/\Q^{\times},
\]
 then since $K/\Q$ is cyclic, the kernel is $K^{\times}$. 
So we see that 
\[
K_A^{\times}/H(O)\cong (K_A^{\times}/K^{\times})/(H(O)/K^{\times})
\cong \Q^{\times}N_{K/\Q}
(K_A^{\times})/\Q^{\times}N_{K/\Q}(U_+(O)).
\]
So the genus number $g$ of $O$ in the narrow sense 
is given by 
\begin{align*}
g& =[K_A^{\times}:H(O)]
=[\Q^{\times}N_{K/\Q}(K_A^{\times}):\Q^{\times}N_{K/\Q}
(U_+(O))]
\\ &  
=[\Q_A^{\times}:\Q^{\times}N_{K/\Q}(U_+(O)]/[\Q_A^{\times}:\Q^{\times}N_{K/\Q}(K_A^{\times})].
\end{align*}
By the class field theory we have 
\[
[\Q_A^{\times};\Q^{\times}N_{K/A}(K_A^{\times})]
=[K:\Q].
\]
On the other hand, we have 
\[
[\Q_A^{\times}:\Q^{\times}N_{K/\Q}(U_+(O))]
=
\prod_p[\Z_p^{\times}:N_{K/\Q}(O_p^{\times})].
\]
So writing $e_p=[\Z_p^{\times}:N_{K/\Q}(O_p^{\times})]$, 
we have 
\[
g=\left(\prod_{p}e_p\right)/[K:\Q].
\]
Here since $O$ might not be maximal, $e_p$ might not be the ramification 
index of $K/\Q$. The abelian extension of $\Q$ corresponding to 
$\Q^{\times}N_{K/\Q}(U_+(O))$ by the class field theory is called the 
genus field of $H(O)$.

We denote by $X(O)$ the set of characters $\phi$ 
of $\prod_p\Z_p^{\times}$ such 
that the $p$ component $\phi_p$ is a character of 
$\Z_p^{\times}/N_{K/\Q}(O_p^{\times})$. 
We can naturally prolong $\phi$ to the character of $\Q_A^{\times}$ 
by setting so that it is trivial on $\Q^{\times}\times \R_+^{\times}$. 
Now we denote by $\chi_K$ one of the characters of 
$\Q_A^{\times}$ trivial on $\Q^{\times}N_{K/\Q}(K_A^{\times})$. This is called a character  
corresponding to $K/\Q$. Of course this is trivial on
$\Q^{\times}\R_+^{\times}$, so it can be regarded 
as a character of $\prod_{p}\Z_p^{\times}$. 
Then the $p$ component $\chi_{K,p}$ of $\chi_K$ 
on $\Z_p^{\times}$ 
is a character of 
$\Z_p^{\times}/N_{K/\Q}(O_{max,p}^{\times})$. Since $O_p^{\times}
\subset O_{max}^{\times}$, the character $\chi_{K,p}$ can be regarded 
as a character of $\Z_p^{\times}/N_{K/\Q}(O_p^{\times})$ and   
we have $\chi_K\in X(O)$. 
If $\chi$ is a genus character of $O$, then the value 
$\chi(a)$ for $a \in K_A^{\times}$ depends only on $\Q^{\times}N_{K/\Q}(a)$,
and (any power of) $\chi_K$ is trivial on the latter elements.
So genus characters of $O$ corresponds bijectively with 
\[
X(O)/\{\chi_K^{i};0\leq i\leq n-1\}, \qquad \text{ $n=[K:\Q]$.}
\]
For a genus character $\chi$ of $O$ corresponding to $\phi\in X(O)$
and $a=(a_p)\in K_A^{\times}$ with $a_p=(a_v)_{v|p}$ such that 
$N(a) \in \R_+^{\times}\prod_{p}\Z_p^{\times}$,
we have $\chi(a)=\phi(N(a))=\prod_{p}\phi_p(N(a_p))$ by definition,
But in general, $N(a)$ belongs to 
$\R_+^{\times}\prod_p\Z_p^{\times}$ only after 
multiplying an element of $\Q^{\times}$,
and in order to give exact values of $\chi(a)$,
we need this kind of adjustment. 
When $K$ is a quadratic 
field, we will describe $X(O)$ and the values of genus characters $\chi$ on 
ideals more precisely in the next section.

\section{Explicit genus characters 
for quadratic orders}
In the rest of the paper, we assume that $K$ is 
a quadratic extension of $\Q$ and denote the norm $N_{K/\Q}$ and the trace 
$Tr_{K/\Q}$ from $K$ to $\Q$ by $N$ and $Tr$, respectively.  
The notation $N$ is also used for the norm $N(\fa)$ of an ideal $\fa$
defined to be $[O_f:\fa]$, but we believe no confusion is likely to happen.
Assume that 
the maximal order of $K$ is written as 
$O_{max}=\Z+\Z\omega$. Then orders $O_f$ of 
$K$ correspond bijectively to positive 
integers $f$ called conductors by 
\[
O_f=\Z+\Z f\omega.
\]
We denote by $D_K$ the fundamental discriminant of $K$ and 
we say that $D=f^2D_K$ is the discriminant of $O_f$. 
We say that an ideal $\fa$ of $O_f$ is proper 
if 
\[
\{\alpha \in K:\fa\alpha \subset \fa\}=O_f.
\]
It is obvious that 
$\fa$ is proper if and only if $\fa_p$ is proper in 
$O_{f,p}=O_f\otimes_{\Z}\Z_p$ for all primes $p$, 
where the word proper is defined similarly for $O_{f,p}$.
Any principal ideal is obviously proper. 
So any locally principal ideal $\fa$ of $O_f$ is proper. 
Conversely we have 

\begin{lemma}\label{locallyprincipal}
Any proper fractional ideal of $O_f$ is locally principal.
\end{lemma}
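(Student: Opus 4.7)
The plan is to reduce to a local statement: both properness of $\fa$ and being locally principal can be checked prime-by-prime (the former was observed just before the Lemma, the latter by definition), so it suffices to show that at every prime $p$, any proper fractional $O_{f,p}$-ideal $\fa_p$ of $K_p$ is principal over $O_{f,p}$. When $p\nmid f$, we have $O_{f,p}=O_{max,p}$, which is either a discrete valuation ring (if $p$ is inert or ramified in $K$) or a product $\Z_p\oplus\Z_p$ of two copies of one (if $p$ splits); either way it is a semilocal principal ideal ring, so every fractional ideal is principal and there is nothing to prove.

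For $p\mid f$, set $e=\ord_p(f)$, so that $O_{f,p}=\Z_p+p^e\Z_p\omega$ is a one-dimensional local ring. Using the previous case applied to $O_{max,p}$, the $O_{max,p}$-module $O_{max,p}\fa_p$ is principal, say $\beta O_{max,p}$; since multiplying by $\beta^{-1}$ does not alter the stabilizer, we may assume $\fa_p\subseteq O_{max,p}$ and $O_{max,p}\fa_p=O_{max,p}$. The latter equality says $\fa_p$ lies in no maximal ideal of $O_{max,p}$; since there are at most two such ideals, the additive-group form of prime avoidance produces $u\in\fa_p\cap O_{max,p}^{\times}$, and dividing by $u$ further normalizes the situation to
\[
O_{f,p}\;\subseteq\;\fa_p\;\subseteq\;O_{max,p}.
\]

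It then remains to classify the $O_{f,p}$-modules sandwiched between $O_{f,p}$ and $O_{max,p}$. A direct computation using $O_{max,p}=\Z_p+\Z_p\omega$ gives $O_{max,p}/O_{f,p}\cong\Z_p/p^e\Z_p$, generated by the class of $\omega$. Because $\omega^2\in\Z+\Z\omega$ and $e\geq 1$, the induced $O_{f,p}$-action on this quotient factors through the ring map $a+bp^e\omega\mapsto a\bmod p^e$, so every $\Z_p$-submodule of the quotient is automatically $O_{f,p}$-stable. Therefore the intermediate $O_{f,p}$-modules correspond to the submodules $p^i\Z_p/p^e\Z_p$ for $0\leq i\leq e$, which pull back precisely to the intermediate orders $O_{p^i,p}=\Z_p+p^i\Z_p\omega$. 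Each such order has itself as its stabilizer in $K_p$, so the properness of $\fa_p$ forces $i=e$, giving $\fa_p=O_{f,p}$, which is principal. The main obstacle lies in this last step: observing that the $O_{f,p}$-action on $O_{max,p}/O_{f,p}$ is essentially scalar collapses the \emph{a priori} rich collection of $\Z_p$-sublattices between $O_{f,p}$ and $O_{max,p}$ down to the single chain of intermediate orders, which the properness hypothesis then pins down to $\fa_p=O_{f,p}$.
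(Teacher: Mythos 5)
Your proof is correct, and it takes a genuinely different route from the paper's. The paper starts from the explicit global normal form of a proper integral ideal, $\fa=\ell(\Z a+\Z(d+f\omega))$ with $N(d+f\omega)=ac$, localizes, and runs a divisibility case analysis on $a$, $d$, $c$: the case $p\mid c$ is excluded because it would make $(d+f\omega)/p$ stabilize $\fa_p$ without lying in $O_{f,p}$, contradicting properness, whence $\fa_p=(d+f\omega)O_{f,p}$. You instead work purely locally and structurally: after extending to $O_{max,p}$ and dividing by a generator, prime avoidance over the at most two maximal ideals of $O_{max,p}$ produces a unit of $O_{max,p}$ inside $\fa_p$, which normalizes the ideal to sit between $O_{f,p}$ and $O_{max,p}$; the observation that $O_{f,p}$ acts on $O_{max,p}/O_{f,p}\cong\Z_p/p^e\Z_p$ through scalars then identifies the intermediate modules with the chain of intermediate orders $\Z_p+p^i\Z_p\omega$, each of which is its own multiplier ring, so properness forces $i=e$. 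Your argument has the advantage of being self-contained (it does not rely on the cited normal form for proper ideals and its norm relation) and it yields slightly more: a classification of all $O_{f,p}$-modules between $O_{f,p}$ and $O_{max,p}$ as the intermediate orders, which is in the spirit of the paper's later Lemma \ref{properideal}. The paper's proof is shorter granted the classical parametrization of proper ideals, and keeps the connection to binary quadratic forms visible. All the individual steps you use (principality of fractional ideals of the semilocal ring $O_{max,p}$, the additive-group form of prime avoidance for two subgroups, the submodule lattice of the cyclic module $\Z_p/p^e\Z_p$, and invariance of the multiplier ring under scaling by $K_p^{\times}$) check out.
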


\begin{proof}
Though this has been proved in \cite{ihara}, we 
give a shorter proof here. We may assume that $\fa$ is integral. 
For a proper integral ideal $\fa$, 
we have integers $a>0$, $\ell>0$, $d\in \Z$ 
such that 
\[
\fa=\ell(\Z a+\Z(d+f\omega))
\]
with $N(d+f\omega)=ac$ for some integer $c$ 
(\cite{arakawaibukaneko}).
It is enough to show that  
$\fa_p=\Z_p a+\Z_p(d+f\omega)$ is principal for any prime $p$.
If $p\nmid f$, then $O_{f,p}=O_{max,p}$ so 
the result is classical (even when $p$ splits). 
So we assume $p|f$. 
If $a \in \Z_p^{\times}$, then $\fa_p=O_{f,p}^{\times}$
so nothing to do. Next we assume $p|a$.  
Since 
\[
N(d+f\omega)=d^2+fdTr(\omega)+f^2N(\omega)=ac.
\]
and we assumed $p|a$, $p|f$, we have $p|d$.  
If $p|c$, then 
\[
(d+f\omega)(d+f\omega^{\sigma})/p=a(c/p)\in \fa  
\text{ for }\sigma\in Gal(K/\Q) \text{ with } \sigma\neq id.
\]
But since $f\omega=fTr(\omega)-f\omega^{\sigma}$,  
we have 
\[
(d+f\omega)/p=-(d+f\omega^{\sigma})/p+2(d/p)+(f/p)Tr(\omega)
\]
with $d/p$, $(f/p)Tr(\omega) \in \Z$, 
so we have $(d+f\omega)(d+f\omega)/p\in \fa$. 
On the other hand we have 
\[
a(d+f\omega)/p=(a/p)(d+f\omega)\in \fa. 
\]
So $\fa(d+f\omega)/p\subset \fa$. But 
$(d/p)+(f/p)\omega\not\in O_{f,p}$ so this contradicts the 
assumption that $\fa_p$ is proper. So we have $p\nmid c$.
This means $a\in (d+f\omega)O_p$, so $\fa_p=O_p(d+f\omega)$.
\end{proof}
The proper ideals are important classically since 
they correspond nicely to the binary quadratic forms (See \cite{arakawaibukaneko}). 

The principal genus $H(O)$ corresponds to square 
classes of locally principal ideals.
This can be seen as follows.
In Proposition \ref{iyanagatamagawa}, we may 
assume that $a\in b^{1-\sigma}K^{\times}U_+$, so 
$a$ is in the same class as $b^{1-\sigma}$ in the 
narrow sense. We regard $\Q_A^{\times}$ as a 
subset of $K_A^{\times}$ naturally
(i.e. for $v=\infty$ or rational prime, 
if $K_v=\Q_v+\Q_v$, then we embed 
$\Q_v$ diagonally and if $K_v$ is a field, we embed 
$\Q_v$ as a subfield.) Since 
\[
bb^{\sigma}\in \Q_A^{\times}=\Q^{\times}\R_+^{\times}
\prod_{p}\Z_p^{\times}\subset K^{\times}U_{+}(O),
\]
we have $b^{1-\sigma}K^{\times}U_+(O)
=b^2K^{\times}U_+(O)$, so 
$a$ belongs to the square classes in the narrow sense.
Hence a genus is a coset of the subgroup of $Cl^+(O)$ 
consisting of square classes in the narrow sense, 
and genus characters are nothing 
but a character of $Cl^+(O)$ of order at most two.
We will describe these characters explicitly in this section.
First we describe components of $N_{K/\Q}(H(O))$ at primes. 
For the sake of completeness and for reader's convenience, 
we review easy known results 
concerning $O_{max}$ for a while.
By the local class field theory, if $p$ is unramified in 
$K$, then we have 
$N(O_{max,p}^{\times})=\Z_p^{\times}$.
The following lemma is well known and easy to see.
\begin{lemma}\label{maxnorm}
(i) When $p$ splits in $K$, we have 
$K_p=\Q_p\oplus \Q_p$ and 
\[
N(K_p^{\times})
=\{p^n:n\in \Z\}\times \Z_p^{\times}.
\]
(ii) When $p$ is unramified and remains prime in $K$, we have 
\[
N(K_p^{\times})=\{p^{2n};n\in \Z\}\times \Z_p^{\times}.
\]
(iii) If $p$ is odd and ramified in $K=\Q(\sqrt{pm})$ where 
$m$ is an integer such that $p\nmid m$, then 
\[
N(K_p^{\times})=\{(-pm)^n:n\in \Z_p\}
\times (\Z_p^{\times})^2.
\]
Here $(\Z_p^{\times})^2$ is defined to be the set of 
square elements of $\Z_p^{\times}$. 
\\
(iv) If $p=2$ is ramified in $K=\Q(\sqrt{m})$ 
for an integer $m$ with $2\nmid m$ (so $m\equiv 3 \bmod 4$), we have 
\[
N(K_2^{\times})
=
\left\{\begin{array}{ll}
(-2)^n \times (1+4\Z_2) & \text{ if $m\equiv 3\bmod 8$,}\\
2^n \times (1+4\Z_2) & \text{ if $m\equiv 7 \bmod 8$}.
\end{array}\right.
\]
(v) If $p=2$ is ramified in $K=\Q(\sqrt{2m})$ for an integer $m$
with $2\nmid m$, we have 
\[
N(K_2^{\times})=
\left\{\begin{array}{ll}
\{2^n:n\in \Z\}\times \{1+8\Z_2,-1+8\Z_2\}
& \text{ if $m\equiv 1 \bmod 8$}, \\
\{(-2)^n:n\in \Z\}\times 
\{1+8\Z_2,3+8\Z_2\}
& \text{ if $m\equiv 3 \bmod 8$},\\
\{6^n:n\in \Z\}\times \{1+8\Z_2,-1+8\Z_2\}
& \text{ if $m\equiv 5 \bmod 8$}, \\
\{2^n:n\in \Z\}\times \{1+8\Z_2,3+8\Z_2\}
& \text{ if $m\equiv 7 \bmod 8$}. 
\end{array}\right.
\]
\end{lemma}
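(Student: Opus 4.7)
The unifying principle is local class field theory: for a quadratic extension $K_p/\mathbb{Q}_p$ we have $[K_p^\times : N(K_p^\times)] = [K_p:\mathbb{Q}_p]$, which is $1$, $2$, or $2$ according as $p$ is split, inert unramified, or ramified. So in every case it suffices to exhibit a subgroup of $\mathbb{Q}_p^\times$ of the prescribed index that is manifestly contained in $N(K_p^\times)$; equality then follows automatically. My plan is therefore: produce a convenient uniformizer $\pi_K$ of $K_p$, compute $N(\pi_K)$ explicitly, and then identify $N(\mathcal{O}_{K_p}^\times)$ as a subgroup of $\mathbb{Z}_p^\times$ by direct calculation with the quadratic form $N(a+b\pi_K)$.

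Cases (i) and (ii) are essentially formal: in (i), $K_p = \mathbb{Q}_p\oplus\mathbb{Q}_p$ and $N(x,y)=xy$ is surjective onto $\mathbb{Q}_p^\times$; in (ii), $p$ is itself a uniformizer of $K_p$ with norm $p^2$, and surjectivity of $N$ on $\mathcal{O}_{K_p}^\times \to \mathbb{Z}_p^\times$ follows from surjectivity of $\mathbb{F}_{p^2}^\times \to \mathbb{F}_p^\times$ together with a Hensel lift of the principal-unit filtration. For (iii), I take $\pi_K = \sqrt{pm}$ (a uniformizer since $v_{K_p}(\pi_K)=1$), with $N(\pi_K)=-pm$. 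A unit of $\mathcal{O}_{K_p}$ has the form $u=a+b\sqrt{pm}$ with $a\in\mathbb{Z}_p^\times$, $b\in\mathbb{Z}_p$, and $N(u)=a^2-pmb^2\equiv a^2\pmod p$, which is a square in $\mathbb{Z}_p^\times$ by Hensel; conversely every square in $\mathbb{Z}_p^\times$ is already the norm of some $a\in\mathbb{Z}_p^\times\subset\mathcal{O}_{K_p}^\times$. Hence $N(\mathcal{O}_{K_p}^\times)=(\mathbb{Z}_p^\times)^2$, and combining with $\langle -pm\rangle$ produces a subgroup of index $2$ in $\mathbb{Q}_p^\times$, which must then equal $N(K_p^\times)$.

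Cases (iv) and (v) follow the same pattern but require picking a uniformizer whose norm reveals the right residue modulo a high power of $2$, and computing $N(\mathcal{O}_{K_2}^\times)$ modulo $4$ or $8$. For (iv), since $m\equiv 3\bmod 4$ the ring of integers is $\mathbb{Z}_2[\sqrt{m}]$ and $1\pm\sqrt{m}$ has $\mathbb{Q}_2$-norm $1-m$ with $v_2(1-m)=1$; the sign of $1-m$ distinguishes $m\equiv 3$ and $m\equiv 7\bmod 8$, producing $N(\pi_K)\equiv -2$ or $+2$ modulo $8$ respectively. A direct computation of $N(a+b\sqrt{m})=a^2-mb^2$ modulo $8$ then shows $N(\mathcal{O}_{K_2}^\times)=1+4\mathbb{Z}_2$ in both subcases. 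For (v), I use $\pi_K=\sqrt{2m}$ with $N(\pi_K)=-2m$ and compute $N(a+b\sqrt{2m})=a^2-2mb^2\pmod 8$; here $a\in\mathbb{Z}_2^\times$ and $b\in\mathbb{Z}_2$, and the four residues of $m\bmod 8$ produce the four different answers for $\{N(\pi_K)^n\}$ and for the unit part $\{1,\pm1\}+8\mathbb{Z}_2$ or $\{1,\pm3\}+8\mathbb{Z}_2$.

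The main obstacle is the bookkeeping in (iv) and (v): one must track both the sign of the norm of the uniformizer (which fluctuates with $m\bmod 8$) and the image of $N$ on the unit group modulo $8$, while making sure each candidate subgroup has the correct index $2$ in $\mathbb{Q}_2^\times$. Once those $2$-adic case splits are handled carefully, index matching with local class field theory closes each case without any further global input.
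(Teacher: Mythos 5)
Your proposal is correct. Note, however, that the paper offers no proof of this lemma at all --- it is introduced with ``The following lemma is well known and easy to see'' --- so there is nothing to compare against; what you have written is the standard verification, and it does close every case. The key structural point, that $N(K_p^{\times})$ has index $[K_p:\Q_p]$ in $\Q_p^{\times}$ by local class field theory (your displayed index should read $[\Q_p^{\times}:N(K_p^{\times})]$, since the norm lands in $\Q_p^{\times}$; in the split case one argues surjectivity of $(x,y)\mapsto xy$ directly rather than via class field theory), reduces everything to exhibiting the claimed subgroup as an index-two subgroup containing $N(\pi_K)$ and $N(\mathcal{O}_{K_p}^{\times})$, and your explicit computations of $N(\pi_K)$ and of the unit norms modulo $p$, $4$, or $8$ do exactly that. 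One small point to watch in case (iv): since $m\equiv 3\bmod 4$, an element $a+b\sqrt{m}$ of $\Z_2[\sqrt{m}]$ is a unit precisely when $a+b$ is odd, so the unit group contains elements with $a$ even and $b$ odd (e.g.\ $\sqrt{m}$ itself, of norm $-m$); your ``direct computation modulo $8$'' must therefore cover both parities, not only $a$ odd, $b$ even --- it does still yield $N(\mathcal{O}_{K_2}^{\times})\subset 1+4\Z_2$ in both subcases, but the case split should be stated. With that caveat the index-matching argument is airtight.
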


So the non-trivial character $\chi_p$ 
of $\Z_p^{\times}/N(O_{max,p}^{\times})$ is 
given as follows. For (i) and (ii), we have 
$\chi_p=1$. For (iii), we have 
$\chi_p(a)=\left(\frac{a}{p}\right)$ (the quadratic residue symbol).
For (iv), $\chi_2(a)$ is 
$\chi_{-4}(a)=\left(\frac{-4}{a}\right)$.
For (v), if $m\equiv 1 \bmod 4$, then 
$\chi_2(a)$ is $\chi_8(a)=\left(\frac{2}{a}\right)$.
For $m\equiv 3 \bmod 4$, we have 
$\chi_2(a)$ is $\chi_{-8}(a)=\left(\frac{-8}{a}\right)$.
For each quadratic field $K/\Q$. 
the character $\chi$ of $\prod_p\Z_p^{\times}$ is 
defined by $\prod_{p}\chi_p$ by taking $\chi_p$ on 
$\Z_p^{\times}$ as above, and we can prolong this 
naturally to a character of $\Q_A^{\times}$ 
by using the direct product decomposition 
\eqref{direct} of $\Q_A^{\times}$.
This is nothing but the character $\chi_K$ corresponding to 
the quadratic extension $K$ over $\Q$. 

This character is also given in another way as explained below.
If a fundamental discriminant $\delta$ of some quadratic field 
can be divided only by one prime, then we say $\delta$ is a prime
 discriminant. For example, for an odd prime $p$, if we write 
 $p^*=(-1)^{(p-1)/2}p$, then this is the unique prime discriminant 
 divided by $p$. For $p=2$, the prime discriminants are
 $-4$, $8$, $-8$.
For each prime discriminant divided by $p$, 
we define a Dirichlet character 
$\chi_{\delta}(a)=\left(\frac{\delta}{a}\right)$ as usual: 
We put $\chi_{\delta}(-1)=-1$ if $\delta<0$ and $=1$ if 
$\delta>0$. For a prime $p$ such that $p\nmid \delta$, 
we put $\chi_{\delta}(p)=1$ if $p$ splits in $\Q(\sqrt{\delta})$, 
$=-1$ if $p$ remains prime, and $=0$ if $p|\delta$.
For any integer $a=\epsilon q_1^{e_1}\cdots q_m^{e_m}$
with $\epsilon=\pm 1$ and primes $q_i$, 
we put $\chi_{\delta}(a)=\chi_{\delta}(\epsilon)
\prod_{i=1}^{m}\chi_{\delta}(q_i)^{e_i}$. 
Any fundamental discriminant $D_K$ of 
a quadratic field $K$ is uniquely decomposed 
into a product of prime discriminants $\delta_i$ 
as $D_K=\delta_1\cdots\delta_r$.
Then for any integer $a$, we define
\[
\chi_K(a)=\prod_{i=1}^{r}\chi_{\delta_i}(a):=\biggl(\frac{D_K}{a}
\biggr).
\]
In particular, we see that $\chi_K(-1)=1$ if $K$ is real and $-1$ if $K$ is
imaginary.
We may regard $\chi_K$ as a character 
$\prod_p\chi_{K,p}$ 
of $\prod_p\Z_p^{\times}$ where for each prime $p$, 
$\chi_{K,p}$ is the character of $\Z_p^{\times}$ already given 
just after Lemma \ref{maxnorm}.
The proof is as follows.
For an odd prime $p$ and an odd prime $q\neq p$,
by the quadratic reciprocity we have 
\[
\left(\frac{p^*}{q}\right)=\left(\frac{q}{p}\right).
\]
This is true even for $q=2$. Indeed 
for $p\equiv 1 \bmod 8$ and $p\equiv 5 \bmod p$, 
we have $\left(\frac{p}{2}\right)=1$ and $-1$, 
respectively. For $p\equiv 3 \bmod 8$ and $7\bmod 8$,
we have $\left(\frac{-p}{2}\right)=-1$ and $1$, 
respectively. We see that in all these cases, this is 
equal to $\left(\frac{2}{p}\right)$. When $p=2$, $\chi_{K,2}$ is the same as 
$\chi_2$ defined before.
We can also see easily that 
$\prod_p\chi_{K,p}(-1)=1$ for real $K$ and $=-1$ for 
imaginary $K$.
Of course the fact mentioned above are all  
classically well known.
We may prolong $\chi_K$ 
to the character of $\Q_A^{\times}$
trivial on $\Q^{\times}\times \R_+^{\times}$.
Then we have $Ker(\chi_K)=\Q^{\times}N(K_A^{\times})$. 
In this adelic setting, calculation of the value of $\chi_K$ at 
an element of $\Q_A^{\times}$ not in $\prod_p\Z_p^{\times}$ is 
easy. 
For example, for a prime $p$ with $p\nmid D_K$, 
put   
\[
[p]:=(1,\ldots,1,p,1,\ldots,1)\in \Q_A^{\times},  
\]
where $p$-component is $p$ and all the other 
components are $1$.
Then we have 
\[
\chi_K([p])=\chi_K((p^{-1},\ldots,p^{-1},1,p^{-1},
\ldots,p^{-1})=\prod_{q\neq p}\chi_{K,q}(p^{-1})=\left(\frac{D_K}{p}\right).
\]
Next we consider $N(O_{f,p}^{\times})$ for $p|f$.
We write $D=f^2D_K$.
We denote by $\ord_p(f)$ the $p$-adic order of $f$.

\begin{lemma}\label{fnorm}
Assume that $p|f$. \\
(1) If $p$ is odd, then 
\[
N(O_{f,p}^{\times})=(\Z_p^{\times})^2.
\]
(2) If $p=2$, then we have 
\[
N(O_{f,2}^{\times})
=
\left\{
\begin{array}{lll}
(i) & \Z_2^{\times} & \text{ if $ord_2(f)=1$ and $D_K$ is odd}, \\
(ii) & 1+4\Z_2 & \text{ if $ord_2(f)=1$ and $D_K
\equiv 12 \bmod 16$},
\\
(iii) & 1+4\Z_2 & 
\text{ if $ord_2(f)=2$ and $D_K\equiv 1 \bmod 4$},\\
(iv) & 1+8\Z_2 & \text{ if $D\equiv 0 \bmod 32$}. \\
\end{array}
\right.
\]
\end{lemma}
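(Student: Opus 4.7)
The plan is to give $O_{f,p}$ an explicit integral basis of the form $\Z_p[\beta]$, identify the quadratic form $Q(A,B) = A^2 - (D/4)B^2$ as the norm, and then determine $Q(O_{f,2}^{\times})$ by a short case analysis on $\ord_2(D/4)$.

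For part (1), a local unit $u = a+bf\omega \in O_{f,p}^{\times}$ must have $a \in \Z_p^{\times}$ since $p \mid f$, and
\[
N(u) = a^2 + abf\,Tr(\omega) + b^2 f^2 N(\omega) \equiv a^2 \pmod p,
\]
so $N(u) \bmod p$ is a nonzero square. For odd $p$ this pins $N(u)$ into $(\Z_p^{\times})^2$ by Hensel's lemma, and the reverse inclusion is the trivial $a\mapsto a^2$.

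For part (2), since $p = 2$ and $f$ is even, $t := f\,Tr(\omega)$ is even; then $\beta := f\omega - t/2$ lies in $O_{f,2}$ and satisfies $\beta^2 = D/4$, so $O_{f,2} = \Z_2+\Z_2\beta$ and the norm of $A+B\beta$ equals $Q(A,B)$. The four sub-cases correspond to $\ord_2(D/4) = 0, 2, 2,$ and $\geq 3$ respectively. In (iv), $\ord_2(D/4)\geq 3$ gives $Q(A,B)\equiv A^2\pmod 8$; the unit condition forces $A$ odd, hence $N(O_{f,2}^{\times})\subseteq 1+8\Z_2 = (\Z_2^{\times})^2$, and $A^2$ alone already covers this group. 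In (ii) and (iii) one checks $D/4 \equiv 4 \pmod 8$, so $Q(A,B)\equiv A^2\pmod 4$ and $N(O_{f,2}^{\times})\subseteq 1+4\Z_2$; the witness $(A,B)=(1,1)$ yields $1-D/4\equiv 5\pmod 8$, and since $[1+4\Z_2:(\Z_2^{\times})^2] = 2$, the subgroup $N(O_{f,2}^{\times})$ must equal $1+4\Z_2$. In (i), $D/4\equiv D_K\pmod 8$ is a unit, the unit condition weakens to ``$A$ and $B$ of opposite parity'', and the four witnesses $(1,0),(1,2),(2,1),(0,1)$ produce norms $1,\,1-D/4,\,4-D/4,\,-D/4$ that distribute across all four cosets of $\Z_2^{\times}/(\Z_2^{\times})^2$.

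The only real obstacle is bookkeeping in sub-case (i): the assignment of the witnesses to the cosets $\{3,7\}\bmod 8$ swaps as $D_K$ moves between $1$ and $5 \pmod 8$, so both possibilities must be verified. In sub-cases (ii)--(iv), the single-witness-plus-subgroup-structure trick makes the argument essentially immediate once $Q(A,B)$ is reduced modulo a small power of $2$.
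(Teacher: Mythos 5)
Your proof is correct and follows essentially the same route as the paper's: write down the binary norm form of $O_{f,p}$ explicitly and determine its unit values modulo $8$ by exhibiting a few witnesses on top of $(\Z_2^{\times})^2=1+8\Z_2$. The only (cosmetic) difference is that you uniformize the four sub-cases of part (2) through the single basis $\{1,\sqrt{D}/2\}$ and the invariant $\ord_2(D/4)$, where the paper writes a separate basis in each case.
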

The above cases exhaust all the cases, since 
the case (iv) is whether $ord_2(f)=1$ and
$D_K\equiv 0 \bmod 8$, 
$ord_2(f)=2$ and $D_K\equiv 0 \bmod 4$, or $3\leq ord_2(f)$. 

\begin{proof}
If $p\neq 2$, then 
\[
N(x+yf\omega)=x^2+xyfTr(\omega)+f^2N(\omega)
\equiv x^2 \bmod p.
\]
If we put $y=0$, we see 
$(\Z_p^{\times})^2\subset N(O_{p,f}^{\times})$, 
so we have (1) by Hensel's lemma.
Now assume $p=2$. 
First of all, we note that 
$1+8\Z_2=(\Z_2^{\times})^2\subset N(O_{f,2}^{\times})$.
Since $\Z_2^{\times}/(1+8\Z_2) \cong 
\Z/2\Z\times \Z/2\Z$, we must see how many cosets 
of $1+8\Z_2$ appears. In case (i), we have 
$O_{f,2}=\Z_2+\Z_2\sqrt{D_K}$, so 
$N(x+y\sqrt{D_K})=x^2-y^2D_K$. So $N(O_{f,2}^{\times})$ contains
$-D_K$, $2^2-D_K=4-D_K$. Since 
$D_K\equiv 1 \bmod 8$ or $5 \bmod 8$, 
these generate $\{1,-1,3,-3\}\subset 
N(O_{f,2}^{\times})$, so we have 
$N(O_{f,2}^{\times})=\Z_2^{\times}$. In case (ii), 
we have $O_{f,2}=\Z_2+\Z_2 2\sqrt{m}$
and $N(x+2y\sqrt{m})=x^2-4my^2$.
If this belongs to $\Z_2^{\times}$, then $x$ should be odd. 
So the norm is $\equiv 1 -4y^2m$. 
This gives $1 \bmod 8$ for even $y$ 
and $5 \bmod 8$ for odd $y$ since $m\equiv 3 \bmod 4$, 
so (ii) is proved.
In the case (iii), we have $O_{f,2}=\Z_2+2\sqrt{D_K}$ and 
$x^2-4D_Ky^2 \equiv 1$ or $5 \bmod 8$. 
For (iv), elements of $O_{f,2}$ is written as 
$\Z_2+\Z_2 \sqrt{D}/2$  so
$N(x+y\sqrt{D}/2)=x^2-y^2(D/4)\equiv 1 \bmod 8$, 
so we prove (iv).
\end{proof}

In Lemma \ref{fnorm}, the corresponding non-trivial character of 
$\Z_p^{\times}/N(O_{f,p}^{\times})$ is 
$\left(\dfrac{p^*}{a}\right)$ for (1), trivial for (2)(i), 
$\chi_{-4}$ for (2)(ii) and (iii), and $\chi_{-4}$, $\chi_8$, 
$\chi_{-8}$ for (2)(iv). 
For a fixed discriminant $D=f^2D_K$, 
a character defined as a product of several local characters 
of $\Z_p^{\times}/N(O_{f,p}^{\times})$ appearing in 
Lemma \ref{maxnorm} and \ref{fnorm} 
is equal to a character $\chi_{\delta}$ corresponding to some 
fundamental discriminant $\delta$ of 
a divisor of $D=f^2D_K$ such that
$D/\delta\equiv 0$ or $1 \bmod 4$. 

Any divisor $\delta$ of $D=f^2D_K$ 
which is a fundamental discriminant of some quadratic field or $1$ such that  
$D/\delta\equiv 0$ or $1 \bmod 4$ is 
called a fundamental divisor of $D$
(Stammteiler in Weber \cite{weber}). For example, $1$ and $D_K$ are 
always fundamental divisors. 
For a fundamental divisor $\delta_1$ of $D$, 
there exists another fundamental divisor $\delta_2$ of $D$  
such that $\delta_1\delta_2=f_1^2D_K$
for some $f_1|f$. Or equivalently we may say 
$D=\delta_1\delta_2f_0^2$ for $f_0$ with $f_0f_1=f$. 
We say that such $\delta_1$ and $\delta_2$ 
are reciprocal. Here $\delta_2$ is determined uniquely by $\delta_1$. 
We have $\chi_{\delta_1}\chi_{\delta_2}=\chi_K$ 
(regarding $\chi_{\delta}$ as the trivial character when 
$\delta=1$, and taking the product so that the result becomes 
a primitive character, i.e. regarding the square of the same prime discriminant 
part as a trivial character.)
 
\begin{prop}[Weber \cite{weber}]
The set of reciprocal pairs of fundamental discriminants of 
$D=f^2D_K$ corresponds bijectively to the set of  
genus characters. In particular, if we denote by $\nu$ the number of 
odd divisors of $D$, then the genus number $g$ of $O$ in the narrow sense
is given as follows.
\[
g=\left\{\begin{array}{ll}
2^{\nu-1} & \text{ if $D\equiv 1 \bmod 4$ or $D\equiv 4 \bmod 16$}, \\
2^{\nu} & \text{ if $D\equiv 8,12,16,24,28 \bmod 32$}, \\
2^{\nu+1} & \text{ if $D\equiv 0 \bmod 32$}.
\end{array}\right.
\]
\end{prop}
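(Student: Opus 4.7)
The plan is to identify $X(O)$ explicitly with the set of fundamental divisors of $D$ and to show that multiplication by $\chi_K$ corresponds to pairing reciprocal divisors; the formula for $g$ will then follow by a direct count. Since genus characters correspond bijectively with $X(O)/\{1,\chi_K\}$ and $|X(O)|=\prod_p e_p$, once the identification with fundamental divisors is in place, $g$ equals half the number of fundamental divisors of $D$, i.e.\ the number of reciprocal pairs.

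First, I would use the explicit lists of non-trivial characters of $\Z_p^{\times}/N(O_{f,p}^{\times})$ given just after Lemmas \ref{maxnorm} and \ref{fnorm}: each such character equals $\chi_{p^*}$, $\chi_{-4}$, $\chi_8$, or $\chi_{-8}$. Hence any $\phi\in X(O)$, extended trivially on $\Q^{\times}\R_+^{\times}$, is a Dirichlet character $\chi_{\delta}$ for some squarefree product $\delta$ of prime discriminants. The constraints on which primes can appear -- read off from the two lemmas -- translate into: (a) every odd prime divisor of $\delta$ divides $fD_K$ (equivalently $D$); and (b) the 2-adic part of $\delta$ lies in the subgroup of $\{1,-4,8,-8\}$ permitted by the relevant case. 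A case-by-case inspection shows that this is precisely the condition ``$\delta\mid D$ and $D/\delta\equiv 0$ or $1\pmod 4$'', i.e.\ that $\delta$ is a fundamental divisor of $D$. This yields a bijection $X(O)\longleftrightarrow\{\text{fundamental divisors of }D\}$.

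Next, $\chi_K=\chi_{D_K}$ itself lies in $X(O)$, and multiplication sends $\chi_{\delta}$ to $\chi_{\delta}\chi_K=\chi_{\delta'}$, where $\delta'$ is the conductor obtained after cancelling common prime-discriminant factors in $\delta D_K$. A short computation shows $\delta\delta'=f_1^2 D_K$ for some $f_1\mid f$, which is exactly the reciprocity relation. Consequently $X(O)/\{1,\chi_K\}$ is in bijection with reciprocal pairs of fundamental divisors of $D$, which proves the first assertion of the proposition.

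Finally I would count. The odd primes contribute a factor of $2^{\nu}$ to $|X(O)|$, and the $2$-part contributes $e_2$, where Lemmas \ref{maxnorm} and \ref{fnorm} give $e_2=1$ precisely when $D\equiv 1\pmod 4$ or $D\equiv 4\pmod{16}$, $e_2=2$ when $D\equiv 8,12,16,24,28\pmod{32}$, and $e_2=4$ when $D\equiv 0\pmod{32}$. Hence $|X(O)|=2^{\nu}e_2$ and $g=|X(O)|/2$ yields the three stated cases. The main obstacle is the $2$-adic bookkeeping in the identification step: one must chase through every subcase of Lemmas \ref{maxnorm}(iv),(v) and \ref{fnorm}(2) to verify that the allowed $2$-parts of $\delta$ are exactly those making $D/\delta\equiv 0$ or $1\pmod 4$, and in particular that all three non-trivial $2$-adic prime discriminants become simultaneously available precisely when $D\equiv 0\pmod{32}$.
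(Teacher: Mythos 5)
Your proposal is correct and follows essentially the same route as the paper: the paper identifies genus characters with $X(O)/\{1,\chi_K\}$ via $K_A^{\times}/H(O)\cong \Q^{\times}N(K_A^{\times})/\Q^{\times}N(U_+(O))$, matches $X(O)$ with fundamental divisors of $D$ using the explicit local norm groups of Lemmas \ref{maxnorm} and \ref{fnorm}, and obtains the count by the same ``careful check'' of the $2$-adic cases that you spell out. The only difference is that you make explicit the bookkeeping the paper leaves as an exercise.
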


\begin{proof}
Since we have 
\[
K_A^{\times}/H(U_+)\cong 
\Q^{\times}N(K_A^{\times})/\Q^{\times}N(U_+(O)),
\]
and $\chi_K(a)=1$ for $a\in \Q^{\times}N(K_A^{\times})$, 
the first part of the 
above proposition is obvious. The assertion on the genus number is obtained 
by a careful check of Lemma \ref{maxnorm} and \ref{fnorm}.
\end{proof}

A class $C \in Cl^+(O)$ in the narrow sense is said to be an ambig class 
if $C^{\sigma}=C$. Equivalently, this is a class $C$ satisfying $C^2=1$. 
For such a class, we can show 
$C$ contains a proper ideal $\fa$ such that 
$\fa=\fa^{\sigma}$. This is called an ambig ideal.
Traditionally, the genus number is obtained by 
counting ambig ideals up to equivalence.
For such proofs, see for example (the Japanese version of) 
\cite{arakawaibukaneko}.
(By the way, note that even if $C$ is of order two in the wide sense, $C$ might not 
contain an ambig ideal.)

Since genus characters are characters of 
$Cl^+(O_f)$, it is preferable to write it as a function on 
proper $O_f$ ideals. We explain this below.
A proper integral ideal $\fa$ of $O_f$ is said to be prime to $f$ if we have 
\[
\fa+fO_f=O_f.
\]
If $\fa$ is prime to $f$, then $\fa$ is a
proper ideal. This is equivalent to the condition that $N(\fa)$ is prime to $f$.
We denote by $I(O_f,f)$ the set of proper $O_f$ ideals 
prime to $f$. 
It is well known that we have a 
bijective multiplicative mapping from 
$I(O_f,f)$ to $I(O_{max},f)$ by 
\[
\fa\rightarrow \fa O_{max},
\]
preserving norm and products 
(See \cite{arakawaibukaneko}).  
So any ideal in $I(O_f,f)$ is uniquely 
decomposed into a product of prime ideals.
If $\fa$ is a proper ideal of $O_f$ not prime to $f$, then 
there exists $\alpha \in K_+^{\times}$ such that 
$\fa\alpha$ is prime to $f$ 
(easily proved by the weak approximation theorem
that claims $K$ is dense in $\prod_{v\in S}K_v^{\times}$ 
for any finite set of places of $K$, or see   
\cite{arakawaibukaneko} for a global
 proof), 
so to give values of genus characters at ideals, 
it is enough to consider values at prime ideals 
$\fp$ in $I(O_f,f)$.
(By the way, considering by ideles, it is clear that 
any proper ideal of $O_f$ not necessarily prime to $f$ is 
also decomposed uniquely 
to the product of ideals of $O_{f,p}$ whose norms are 
powers of $p$. But maximal $O_f$ ideals are not proper in 
general and there is no proper ideal of norm $p$ 
for $p|f$. In particular, there is 
no prime ideal decomposition for ideals of $O_{f,p}$
in general.)  

We give a formula below how to calculate $\chi(\fp)$ for prime ideals $\fp\in I(O_f,f)$
for a genus character $\chi$. 

\begin{theorem}\label{charactervalue}
Let $\delta_1$, $\delta_2$ be a reciprocal pair of 
fundamental divisors and $\chi$ be a genus character 
associated with the pair.
Then for a prime ideal $\fp\in I(O_f,f)$,
we have the following formula.
\\
(1) If $\fp$ is unramified in $K$, then 
we have 
\[
\chi(\fp)=\chi_{\delta_1}(N(\fp))=\chi_{\delta_2}(N(\fp)).
\]
(2) If $\fp$ is ramified, then $N(\fp)$ is prime to 
one of $\delta_i$ (say $\delta_1$).
Then we have 
\[
\chi(\fp)=\chi_{\delta_1}(N(\fp)).
\]
\end{theorem}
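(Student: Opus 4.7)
The plan is to represent $\fp$ as an idele $a_{\fp} \in K_A^{\times}$, compute $N(a_{\fp})$ locally, and apply the identification from Section~2 of the genus character $\chi$ with a Dirichlet character $\chi_{\delta_1}$ in the reciprocal pair (equivalently $\chi_{\delta_2}$, since the two differ by $\chi_K$, which is trivial on norms). Since $\fp$ is prime to $f$, the rational prime $p$ below $\fp$ satisfies $p \nmid f$, so $O_{f,p} = O_{max,p}$. I would take $a_{\fp}$ with $(a_{\fp})_v = 1$ for every place $v \ne p$ and $(a_{\fp})_p$ a uniformizer of $K_p$ at the local place corresponding to $\fp$ (putting $1$ at the other direct summand when $p$ splits); by the ideal--idele dictionary this $a_{\fp}$ represents the class of $\fp$ in $Cl^+(O_f)$.

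A direct local computation yields $N(a_{\fp}) = [p] = [N(\fp)]$ in the split case, $N(a_{\fp}) = [p^2] = [N(\fp)]$ in the inert case, and $N(a_{\fp}) = [pu]$ for some $u \in \Z_p^{\times}$ in the ramified case (the norm of a uniformizer of a ramified quadratic extension of $\Q_p$ equals $p$ times a unit, and $N(\fp) = p$). For the unramified case, $p \nmid D_K$ implies $p \nmid \delta_1 \delta_2$, so both $\chi_{\delta_1}$ and $\chi_{\delta_2}$ are unramified at $p$. Writing $[p] = p \cdot ([p]/p)$ as a product of a diagonal rational and a unit idele, and using the local expression of the idele class character $\tilde\chi_{\delta_i}$, one obtains $\chi(\fp) = \chi_{\delta_i}(N(\fp))$ for $i = 1, 2$. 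The equality of the two values reduces to $\chi_K(N(\fp)) = 1$, which holds because $\chi_K(p) = 1$ when $p$ splits and $N(\fp) = p^2$ when $p$ is inert.

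For the ramified case, $p \mid D_K$, and the reciprocal relation $\delta_1 \delta_2 = f_1^2 D_K$ forces the prime discriminant at $p$ to divide exactly one of $\delta_1, \delta_2$; labeling so that $p \nmid \delta_1$, the local character $\chi_{\delta_1,p}$ is unramified, hence trivial on $\Z_p^{\times}$, so the local unit $u$ contributes trivially and $\chi(\fp) = \chi_{\delta_1}(p) = \chi_{\delta_1}(N(\fp))$. The main obstacle is the careful local--global bookkeeping to match the abstract genus character on $K_A^{\times}/H(O_f)$ with the specific Dirichlet character $\chi_{\delta_1}$, and in particular to verify that the labeling of the reciprocal pair can always be chosen so that the $p$-adic unit drops out in the ramified case. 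Once this identification is secured, the three case computations reduce to direct local calculations.
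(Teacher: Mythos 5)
Your proposal is correct and follows essentially the same route as the paper: represent $\fp$ by an idele supported at $p$, compute $N(a_{\fp})$ locally in the split, inert, and ramified cases, adjust by a rational multiple ($p^{-1}$ or $p^{-2}$) to land in $\R_+^{\times}\prod_q\Z_q^{\times}$, and use that the local component of $\chi_{\delta_1}$ at $p$ is trivial (both $\delta_i$ being prime to $p$ in the unramified case, and $\delta_1$ chosen prime to $p$ in the ramified case, which is possible since $p\nmid f$ implies $p\nmid f_1$) so the residual $p$-adic unit drops out. The minor imprecision of writing $N(a_{\fp})=[p^2]$ rather than $[p^2N(\epsilon)]$ in the inert case is harmless for exactly this reason.
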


\begin{proof}
For $p\nmid f$, 
the prime ideal $\fp$ over $p$ corresponds with an idele
\[
a=(1,\ldots,1,a_p,1,\ldots,1)\in K_A^{\times}
\]
where $a_p\in K_p^{\times}$ is the $p$-th component such that 
$\fp_p=a_pO_{f,p}=a_pO_{max,p}$.
The value of a genus character $\chi$ on $a$ is determined by 
the value of $N(a)\in \Q_A^{\times}$ for the 
corresponding characters $(\chi_{\delta_1},\chi_{\delta_2})$. 
When $p$ remains prime in $K$, then we have 
$N(\fp)=p^2$ and 
$a_p=p\epsilon$ with $\epsilon \in O_{max,p}^{\times}$.
So we have 
\[
N_{K/\Q}(a)=(1,\ldots,1, p^2N(\epsilon),1,\ldots,1)\in \Q_A^{\times}.
\]
We must change this to an element of 
$\prod_{q}\Z_q^{\times}$ by multiplying an element of 
$\Q^{\times}$ to evaluate by the character of 
$\prod_q\Z_q^{\times}$. So we consider 
\[
p^{-2}N(a)=(p^{-2},\ldots,p^{-2},N(\epsilon),
p^{-2}\ldots,p^{-2}).
\]
Since any local character is of order two at $q\neq p$ 
and trivial at $p$  (since $\Z_p^{\times}=N(O_{p,f}^{\times})$), 
we have $\chi(\fp)=1$. We may write this as 
\[
\chi_{\delta_i}(N(\fp))=\chi_{\delta_i}(p^2)=1.
\]
If $\fp$ over $p$ is unramified and split in $K$, 
then we have $O_{f,p}=O_{max,p}=\Z_p\oplus \Z_p$ and 
$a_p=(p,1)$ or $(1,p)$. So $N(a_p)=p$. 
Multiplying $p^{-1}$ to $a$, we have 
\[
p^{-1}N(a)=(p^{-1},\ldots,p^{-1},1,p^{-1}\ldots,p^{-1}).
\]
So $\chi(\fa)=\prod_{q\neq p}\chi_{\delta_1,q}(p^{-1})=\chi_{\delta_1}(p^{-1})$. 
Since $\chi_K(p)=1$, this is of course equal to $\chi_{\delta_2}(p^{-1})$.
Since $\chi_{\delta_i}$ is of order 
two, this is equal to $\chi_{\delta_i}(p)$. 
So we have $\chi(\fp)=\chi_{\delta_1}(N(\fp))=\chi_{\delta_2}(N(\fp))$.
If $(p)=\fp\fp^{\sigma}$, then we also have $\chi(\fp^{\sigma})=\chi(\fp)$.

Finally, assume that $\fp$ is ramified. 
Then we have 
\[
N(a)=(1,\ldots,1,pu,1,\ldots,1)
\]
for some $u\in \Z_p^{\times}$. We have 
\[
p^{-1}N(a)=(p^{-1},\ldots,p^{-1},u,p^{-1},\ldots,p^{-1}).
\]
Since we assumed $p\nmid f$, one of $\delta_i$ does not contain $p$ as a factor.
Take such $i$ (say $i=1$). Then we have $\chi_{\delta_1,p}=1$ so 
$\chi_{\delta_{1,p}}(u)=1$ and  
\[
\chi(\fp)=\chi_{\delta_1}(p^{-1})=\chi_{\delta_1}(p).
\]
So the proof is completed.
\end{proof}

\begin{remark}
(1) When $p$ is ramified in the above proof,
if we take $\chi_{\delta_2}$ with $p|\delta_2$ instead, 
then we should have 
$\chi(\fa)=\chi_{\delta_2,p}(u)\prod_{q\neq p}\chi_{\delta_2,q}(p)$.
This is the same as $\chi_{\delta_1}(p)$ since 
$\chi_K$ is the character that is trivial on $\Q^{\times}
N(K_A^{\times})$. This can be also proved directly by 
using Lemma \ref{maxnorm}.\\
(2)  
Even when $\fa$ is not prime to $f$, we can also give 
some formula for $\chi(\fa)$ by the same sort of 
consideration, but in this case, we cannot describe it 
only by $N(\fa)$ since a value of some unit part like $u$ 
for the ramified case remains. 
We do not go into details since it seems we cannot avoid 
a bit complicated case studies. 
\end{remark}

\section{The $L$-functions of genus characters}
For an order $O_f$ of a quadratic field $K/\Q$, 
we fix a genus character $\chi$.
We define a $L$-function of proper ideals of $O_f$ with 
character $\chi$ as 
\[
L(s,O_f,\chi)=\sum_{\begin{subarray}{c}proper\ \fa\subset O_f\end{subarray}}
\dfrac{\chi(\fa)}{N(\fa)^s},
\]
 where the sum is taken over all integral proper ideals of 
 $O_f$ including those not prime to $f$. 
Since any proper ideal $\fa$ of $O_f$ is identified 
with a representative $a=(a_v)$ of $K_A^{\times}/K^{\times}U_{\infty,+}
\prod_{p}O_{f,p}^{\times}$
we have the unique decomposition of $\fa$ 
to the product  
$\fa=\prod_{p}(\fa_p\cap K)$
where $\fa_p=a_pO_{f,p}$, $a_p=(a_v)_{v|p}$.  
Here $N(\fa_p)=N(\fa_p\cap K)$ 
is a power of $p$. So it is clear that 
$L(s,O_f,\chi)$ is a product of the Euler
$p$-factors.

The Euler $p$ factors such that $p$ is prime to $f$ is simple.
We see this part first. Put  
\[
L_f(s,O_f,\chi)=\sum_{\fa\in I(O_f,f)}\frac{\chi(\fa)}
{N(\fa)^s},
\]
and we will write a formula for this. 
Assume that $\chi$ corresponds with 
a reciprocal pair of fundamental divisors 
$(\delta_1,\delta_2)$.
To simplify notation, we denote 
the Dirichlet character $\chi_{\delta_1}$, 
$\chi_{\delta_2}$ corresponding 
to $\delta_1$ and $\delta_2$ by 
$\phi$ and $\psi$. 
We will see the Euler $p$-factor for a prime $p$ with $p\nmid f$. 
If $p$ splits in $K$,  
then we have $pO_{f,p}=\fp_1\fp_2$ for some prime ideals $\fp_1$, $\fp_2$.
Then by Theorem \ref{charactervalue}, 
we have $\chi(\fp_1)=\chi(\fp_2)=
\phi(p)=\psi(p)$.  
So the Euler $p$ factor should be 
\[
\frac{1}{(1-\phi(p)p^{-s})(1-\psi(p)p^{-s})}.
\]
If $p$ remain prime, then $\chi(\fp)=\chi(pO_{f,p})
=1$, so the Euler factor is 
\[
\frac{1}{1-N(\fp)^{-s}}=\frac{1}{1-p^{-2s}}.
\]
But since $\chi_K(p)=-1$, we have 
$\phi(p)\psi(p)=-1$, so 
the Euler factor can be also written as 
\[
\frac{1}{(1-\phi(p)p^{-s})(1-\psi(p)p^{-s})}.
\]
If $p$ is ramified in $K$, then $\chi_K(p)=0$. 
By definition of the character $\chi$ and the assumption 
$p\nmid f$, we have $\phi(p)=0$ and $\psi(p)\neq 0$, or $\phi(p)\neq 0$ 
and $\psi(p)\neq 0$.
So again the Euler $p$ factor is 
\[
\frac{1}{(1-\phi(p)p^{-s})(1-\psi(p)p^{-s})}.
\]
So if we write $L_f(s,\phi)$ and $L_f(s,\psi)$ 
the usual Dirichlet $L$-functions omitting the Euler
$p$ factors such that $p|f$,  
we have 
\[
L_f(s,O_{f,p},\chi)=L_f(s,\phi)L_f(s,\psi).
\]
This is well known for $f=1$ (See Siegel \cite{siegel} for example).
The remaining problem is to determine the Euler $p$ factors 
for $p|f$. 
To determine these part, we need more precise description of 
proper ideals of $O_{f,p}$ for $p|f$. 
So we fix a prime $p$ with $p|f$.
For any integer $b\geq 0$, we write 
\[
R_b=\Z_p+\Z_pp^b\omega
\]
where $1$, $\omega$ is a basis over $\Z$ of $O_{max}$.
This is equal to $O_{f,p}$ for any $f$ with $ord_p(f)=b$. 
For a proper integral ideal $\fa=\alpha R_e$ of $R_e$,
if $N(\alpha)=p^du$ ($u\in \Z_p^{\times}$),
then we have $N(\fa)=N(\fa\cap K)=p^d$.

\begin{lemma}\label{properideal}
(1) If $\fa$ is a proper integral ideal of $R_e$ such that 
$ord_pN(\fa)\leq 2e$, then $ord_pN(\fa)$ is even. 
\\
(2) Proper integral ideals $\fa$ of $R_e$ such that 
$N(\fa)=p^{2c}$ with $c\leq e$ are given by 
$p^c\epsilon R_e$  for some 
$\epsilon\in R_{e-c}^{\times}$.
The number of such ideals of $R_e$ is equal to 
$[R_{e-c}^{\times};R_e^{\times}]$. \\
(3) Proper integral ideals $\fa$ of $R_e$ such that 
$N(\fa)=p^{2e+c}$ with $c\geq 0$ are given by  
$\fa=p^e\fa_0R_e$ for integral ideals $\fa_0$ 
of $R_0=O_{max,p}$ with $N(\fa_0)=p^c$. 
The number of such ideals of $R_e$ is equal to 
$[R_0^{\times}:R_e^{\times}]$ times the number of 
ideals $\fa_0$ of $R_0$ with $N(\fa_0)=p^c$.
\end{lemma}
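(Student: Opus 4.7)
The plan is to work directly with a generator. Since $\fa$ is proper and locally principal (from Lemma \ref{locallyprincipal}), I would write $\fa=\alpha R_e$ with $\alpha\in R_e$, so that $N(\fa)=[R_e:\alpha R_e]=|N_{K_p/\Q_p}(\alpha)|_p^{-1}$. Expanding $\alpha=A+Bp^e\omega$ with $A,B\in\Z_p$ gives
\[
N_{K_p/\Q_p}(\alpha)=A^2+ABp^eTr(\omega)+B^2p^{2e}N(\omega),
\]
and a single case split on $\ord_p(A)$ will handle all three parts. If $\ord_p(A)<e$, the term $A^2$ strictly dominates the other two in $p$-adic valuation (the latter have valuations $\ge\ord_p(A)+e$ and $\ge 2e$, both greater than $2\ord_p(A)$), so $\ord_p N(\alpha)=2\ord_p(A)$, which is even and strictly less than $2e$. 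If $\ord_p(A)\ge e$, then $\alpha\in p^eR_0$, hence $\alpha=p^e\beta$ with $\beta\in R_0$ and $\ord_p N(\alpha)=2e+\ord_p N(\beta)\ge 2e$. This dichotomy establishes part (1) directly.

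For part (2), if $N(\fa)=p^{2c}$ with $c<e$, the first alternative forces $\ord_p(A)=c$; writing $A=p^cA'$ with $A'\in\Z_p^\times$, I factor $\alpha=p^c\epsilon$ with $\epsilon=A'+Bp^{e-c}\omega\in R_{e-c}$. The same valuation computation applied to $\epsilon$ yields $N(\epsilon)\in\Z_p^\times$; together with $\bar\epsilon=(A'+Bp^{e-c}Tr(\omega))-Bp^{e-c}\omega\in R_{e-c}$ and $\epsilon^{-1}=\bar\epsilon/N(\epsilon)$, this gives $\epsilon\in R_{e-c}^\times$. The boundary case $c=e$ falls under the second alternative, where $N(\beta)\in\Z_p^\times$ forces $\beta\in R_0^\times=R_{e-c}^\times$. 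Since $p^c\epsilon R_e=p^c\epsilon'R_e$ iff $\epsilon/\epsilon'\in R_e^\times$, the count is exactly $[R_{e-c}^\times:R_e^\times]$. For part (3), if $N(\fa)=p^{2e+c}$ with $c\ge 0$, the contrapositive of the first alternative forces $\ord_p(A)\ge e$, so $\alpha=p^e\beta$ with $\beta\in R_0$ and $|N(\beta)|_p^{-1}=p^c$; setting $\fa_0:=\beta R_0$ (principal, since $R_0$ is locally either a DVR or $\Z_p\oplus\Z_p$) identifies $\fa=p^e\beta R_e$ with the shorthand $p^e\fa_0 R_e$ of the statement. Fixing $\fa_0$ pins $\beta$ down modulo $R_0^\times$, while $\fa$ only depends on $\beta$ modulo $R_e^\times$, giving $[R_0^\times:R_e^\times]$ $R_e$-ideals per $R_0$-ideal $\fa_0$.

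The only point that needs care, rather than a genuine obstacle, is the reading of $p^e\fa_0 R_e$: taken literally as a product of modules it coincides with $p^e\fa_0$, whose multiplier ring is $R_0$ rather than $R_e$, and which therefore would not be proper as an $R_e$-ideal. The intended reading, consistent with the form $\fa=\alpha R_e$ used throughout, is to pick a generator $\beta$ of the principal ideal $\fa_0$ and form $p^e\beta R_e$. Beyond this interpretive point, the whole lemma reduces to the $p$-adic valuation argument sketched above together with the standard identity $[L:\alpha L]=|N_{K_p/\Q_p}(\alpha)|_p^{-1}$ for full-rank $\Z_p$-lattices $L$ in $K_p$.
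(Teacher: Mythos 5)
Your proof is correct and follows essentially the same route as the paper: write $\fa=\alpha R_e$ using Lemma \ref{locallyprincipal}, expand $N(\alpha)$ for $\alpha=A+Bp^e\omega$, and read everything off from the $p$-adic valuation of the constant coefficient (the paper organizes the case split by the size of $\ord_p N(\fa)$ rather than of $\ord_p(A)$, but the underlying computation is identical). Your closing remark on how to read $p^e\fa_0R_e$ also matches what the paper actually does, namely choosing a generator $\alpha_0$ of $\fa_0$ and forming $p^e\alpha_0R_e$.
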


\begin{proof}
Since proper ideals are locally principal, 
we write $\fa=(x+yp^e\omega)R_e$. 
Then $N(\fa)=p^c$ is equivalent to 
$ord_pN(x+yp^e\omega)=c$. 
We put $a=\ord_p(x)$. 
If we assume that $2e\leq c$, then $e\leq a$. 
Indeed if $a<e$, then 
\[
N(x+yp^e\omega)=x^2+xp^eyTr(\omega)+p^{2e}
N(\omega)\equiv 0 \bmod p^c,
\]
and 
\[
\\ord_p(x^2)=2a<a+e\leq \\ord_p(xp^eyTr(\omega))
\]
so $\ord_p(N(x+p^ey\omega))=2a<2e\leq c$, which is 
a contradiction. So we have $e\leq a$ and 
\[
\fa=p^e\alpha_0R_e \qquad \text{ for some }
\alpha_0=x_0+y_0\omega\in R_0.
\]
If we put $\fa_0=\alpha_0R_0$, then 
this is of course an ideal of $R_0$.
Here the generators of $\fa_0$ are written as 
$\alpha_0\epsilon$ with $\epsilon \in R_0^{\times}$, 
but generators of $\alpha_0\epsilon R_e$ are 
$\alpha_0\epsilon\epsilon_0$ with $\epsilon_0 \in R_e^{\times}$.
So, for each $R_0$ ideal $\fa_0$, the number of 
ideals $\fb_0$ of $R_e$ such that $\fb_0R_0=\fa_0$ is $[R_0^{\times}:R_e^{\times}]$,
Hence we prove (3).
(Note here that $\fa_0R_e$ is not an integral ideal of 
$R_e$ in general, but $p^e\fa_0R_e$ is.)
Next we assume that $c<2e$. We show that 
we have $c/2\leq a$. Indeed, if $a<c/2$, then 
$\ord_p(x^2)<c$, 
$c\leq [c/2]+e\leq \ord_p(xp^eyTr(\omega))$,
and $c< \ord_p(p^{2e}y^2N(\omega))$, so 
we have $\ord_p(N(x+yp^e\omega))=\ord_p(x^2)<c$ 
which is a contradiction. So we have $c/2\leq a$.
If $c$ is odd, then $(c+1)/2\leq a$, $e$, and we see that 
$\fa=p^{(c+1)/2}(x_0+p^{e-(c+1)/2}y\omega)$, so 
$c+1\leq ord_pN(\fa)$, which is a contradiction.
So there exists no ideal such that $ord_pN(\fa)$ is odd 
and $<2e$. So we have (1).  If $c$ is even, 
we rewrite $c$ by $2c$. Then we have 
\[
\fa=p^{c}(x_0+y_0p^{e-c}\omega)R_e.
\]
Since $N(\fa)=p^{2c}$,  we have 
$x_0+y_0p^{e-c}\in R_{e-c}^{\times}$. 
So the number of ideals is exactly equal to 
$[R_{e-c}^{\times}:R_e^{\times}]$.
\end{proof}

In order to count the number of ideals, we give necessary indices.
\begin{lemma}\label{count}
For $1\leq e-c$ we have 
\[
[R_{e-c}^{\times};R_e^{\times}]=p^c.
\]
For $e=c$ and $R_{e-c}=R_0$,  we have 
\[
[R_0^{\times}:R_e^{\times}]=p^{e-1}(p-\chi_K(p)).
\]
\end{lemma}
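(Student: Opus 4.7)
The plan is to reduce both indices to the single-step index $[R_b^\times : R_{b+1}^\times]$ and then telescope. First I would give a clean description of $R_b^\times$ for $b \geq 1$. Writing an element of $R_b$ as $a + d p^b\omega$ with $a,d \in \Z_p$, its norm is $a^2 + adp^b Tr(\omega) + d^2 p^{2b} N(\omega) \equiv a^2 \bmod p$, so the element is a unit in $R_b$ iff $a \in \Z_p^\times$; the inverse $u^\sigma/N(u)$ then lies in $R_b$ by a direct check using $\omega^\sigma = Tr(\omega) - \omega$. Thus
$$R_b^\times = \{a + d p^b\omega : a \in \Z_p^\times,\ d \in \Z_p\}\quad\text{for every } b \geq 1.$$

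Next I would establish the key step: $[R_b^\times : R_{b+1}^\times] = p$ for every $b \geq 1$. Using $\omega^2 = Tr(\omega)\omega - N(\omega)$, the product $(a_1 + d_1 p^b\omega)(a_2 + d_2 p^b\omega)$ equals $A + D\cdot p^b\omega$ with $A \equiv a_1 a_2 \bmod p$ and $D \equiv a_1 d_2 + a_2 d_1 \bmod p$, since the extra cross terms carry an explicit factor of $p^b$. Consequently $\phi_b : R_b^\times \to \Z/p\Z$ defined by $\phi_b(a + d p^b\omega) = d/a \bmod p$ is a well-defined homomorphism, surjective (take $a=1$), with kernel exactly $R_{b+1}^\times$. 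Telescoping $R_{e-c} \supset R_{e-c+1} \supset \cdots \supset R_e$ (with $e-c \geq 1$) then yields part (1):
$$[R_{e-c}^\times : R_e^\times] = p^c.$$

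For part (2), by part (1) applied with $c = e-1$ it suffices to show $[R_0^\times : R_1^\times] = p - \chi_K(p)$. Noting that $pR_0 = p\Z_p + p\omega\Z_p \subseteq R_1$ and that $1 + pR_0 \subseteq R_1^\times$, we write
$$[R_0^\times : R_1^\times] = \frac{[R_0^\times : 1 + pR_0]}{[R_1^\times : 1 + pR_0]}.$$
The numerator equals $|(R_0/pR_0)^\times|$, and $R_0/pR_0$ is $\F_p \oplus \F_p$, $\F_{p^2}$, or $\F_p[\epsilon]/(\epsilon^2)$ according as $p$ splits, is inert, or is ramified in $K$, giving numerators $(p-1)^2$, $p^2-1$, and $p(p-1)$ respectively. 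The denominator is computed uniformly by the homomorphism $R_1^\times \to \Z_p^\times/(1+p\Z_p) \cong \F_p^\times$ sending $a + d p\omega$ to $a \bmod p$; its kernel is precisely $1 + pR_0$, so the denominator is $p-1$ in every case. Dividing gives $p-1$, $p+1$, and $p$ respectively, which in all three cases equals $p - \chi_K(p)$.

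The main obstacle is the bookkeeping in the middle step, namely verifying that $\phi_b$ is multiplicative despite the non-trivial ring structure of $R_b$; once that identity is in hand, the telescoping and the three local unit counts are routine.
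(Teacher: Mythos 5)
Your proof is correct, and it is organized differently from the paper's. The paper splits into the three cases ($p$ split, inert, ramified) from the outset and computes each index by comparing $R_{e-c}^{\times}$ and $R_e^{\times}$ against the filtration $1+P^k$ of $O_{max,p}^{\times}$ (in the split case via the explicit congruence description $R_e=\{(a,b):a\equiv b \bmod p^e\}$). You instead isolate the one step where the splitting type actually matters: the uniform description $R_b^{\times}=\{a+dp^b\omega: a\in \Z_p^{\times}\}$ and the additive character $a+dp^b\omega\mapsto d/a \bmod p$ (whose multiplicativity follows from $\omega^2=Tr(\omega)\omega-N(\omega)$, exactly as you check) give $[R_b^{\times}:R_{b+1}^{\times}]=p$ for all $b\geq 1$ with no case distinction, and the splitting type enters only through $\bigl|(R_0/pR_0)^{\times}\bigr|/(p-1)=p-\chi_K(p)$ in the single index $[R_0^{\times}:R_1^{\times}]$. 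This buys a cleaner conceptual separation (the factor $p^c$ is ``generic,'' the factor $p-\chi_K(p)$ is where $K$ enters) and avoids repeating the filtration computation three times; the paper's version, on the other hand, produces along the way the explicit local models of $R_e^{\times}$ in each case, which it reuses elsewhere. All the individual verifications in your argument (the unit criterion $a\in\Z_p^{\times}$, the kernel of $\phi_b$ being $R_{b+1}^{\times}$, the identification of $1+pR_0$ as the kernel of $R_1^{\times}\to\F_p^{\times}$) check out.
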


\begin{proof}
First we assume that $p$ splits in $K$. Then we have 
$K_p=\Q_p\oplus \Q_p$ and 
$R_0=O_{max,p}=\Z_p\oplus \Z_p$.
Since $\omega\in K$ is embedded in $\Z_p\oplus \Z_p$
by $\omega \rightarrow (\omega,\omega^{\sigma})$ 
where $\sigma$ is the non trivial automorphism of 
$K/\Q$, it is easy to see that 
\[
R_e=O_{f,p}=\{(a,b)\in \Z_p\oplus \Z_p; a\equiv b \bmod p^e\}.
\]
So if $e-c>0$, we have 
\[
R_{e-c}^{\times}/R_e^{\times}\cong 
(1+p^{e-c}\Z_p)/(1+p^e\Z_p)\cong \Z_p/p^{c}\Z_p
\]
and the order is $p^c$. If $e=c$, then 
\[
R_0^{\times}/R_e^{\times}=\Z_p^{\times}/(1+p^e\Z_p)
\]
and the order is $p^{e-1}(p-1)=p^{e-1}(p-\chi_K(p))$.
Next we assume that $p$ remains
prime in $K$. We write $P=pO_{max,p}$.
When $c=e$, the order of 
$R_0^{\times}/(1+P^e)\cong R_0^{\times}/(1+P)\times (1+P)/(1+P^e)$ is 
$(p^2-1)p^{2(e-1)}$. If $c<e$, then  
\[
R_{e-c}^{\times}/(1+P^e)\cong 
R_{e-c}^{\times}/(1+P^{e-c})\times 
(1+P^{e-c})/(1+P^e).
\]
and the order is $(p-1)p^{e+c-1}$.
So we have 
\[
[R_{e-c}^{\times}:R_e^{\times}]=\left\{
\begin{array}{ll}
p^c & \text{ if $c<e$}, \\
p^{e-1}(p+1) & \text{ if $c=e$}.
\end{array}\right.
\]
So noting that $\chi_K(p)=-1$, we have the assertion.
Finally we assume that $p$ ramifies in $K$  
and denote by $P$ the prime ideal of $O_{max,p}$. 
Then we have $P^2=pO_{max,p}$.
Assume that $c<e$. Then 
we have 
\[
[R_{e-c}^{\times}:1+P^{2e}]=[R_{e-c}^{\times}:1+P^{2(e-c)}]
[1+P^{2(e-c)}:1+P^{2e}].
\]
The order of this index is $(p-1)p^{e+c-1}$. 
So we have 
\[
[R_{e-c}^{\times}:R_e^{\times}]=p^c.
\]
If $c=e$, then 
\[
[R_0^{\times};R_e^{\times}]=[R_0^{\times}:1+P^{2e}]/
[R_e:1+P^{2e}]=(p-1)p^{2e-1}/(p-1)p^{e-1}=p^e.
\]
Since $\chi_K(p)=0$, we have the result.
\end{proof}

Finally we calculate the $L$-function of 
the genus character including the Euler $p$ factors with $p|f$.
We fix a genus character $\chi$ of $O_f$ corresponding to 
a pair $(\delta_1,\delta_2)$ of 
reciprocal fundamental divisors. 
By definition, we have $\delta_1\delta_2=f_1^2D_K$ of 
some divisor $f_1$ of $f$. 
If we put $f_0=f/f_1$, then we can also say that 
$D=\delta_1\delta_2f_0^2$. 
For any $p|f$, we write $m_p=\ord_p(f_0)=\ord_p(f/f_1)$. 
For any fundamental discriminant $\delta$ of 
a quadratic field $F/\Q$ and the Dirichlet 
character $\chi_{\delta}(a)=\left(\frac{\delta}{a}\right)$,
 we define the Dirichlet $L$-function $L(s,\chi_{\delta})$
 as usual by
 \[
 L(s,\chi_{\delta})=\prod_{p}
 (1-\chi_{\delta}(p)p^{-s})^{-1}.
 \]
 Here we regard $\chi_{\delta}(p)=0$ if $p|\delta$.
For the sake of simplicity, we write $\phi=\chi_{\delta_1}$ 
and $\psi=\chi_{\delta_2}$ as before. 

The following theorem was given in \cite{kanekomizuno}.
We give here a simple alternative proof.

\begin{theorem}\label{main}
Notation being as above, we have 
\begin{multline}\label{mainequation}
 L(s,O_f,\chi)
=  
L(s,\phi)L(s,\psi)\times
\\
 \prod_{p|f_0}
\frac{(1-\phi(p)p^{-s})(1-\psi(p)p^{-s})
-p^{m_p(1-2s)-1}(p^{1-s}-\phi(p))(p^{1-s}-\psi(p))}
{1-p^{1-2s}},
\end{multline}
where the product is taken over primes dividing the positive integer $f_0$ such that  
$D=\delta_1\delta_2f_0^2$. Here if $m_p=ord_p(f_0)=0$, 
then the $p$-factor of the product is regarded as $1$.
\end{theorem}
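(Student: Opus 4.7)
The plan is to establish \eqref{mainequation} by expressing $L(s,O_f,\chi)$ as an Euler product $\prod_p E_p(s)$ and matching each local factor individually. A proper ideal $\fa$ is locally principal (Lemma~\ref{locallyprincipal}), hence encoded by an idele class, and one has the decomposition $\fa=\prod_p(\fa_p\cap K)$ with $N(\fa)=\prod_p N(\fa_p\cap K)$. For $p\nmid f$ one has $O_{f,p}=O_{max,p}$, and the explicit character values of Theorem~\ref{charactervalue} show $E_p(s)=[(1-\phi(p)p^{-s})(1-\psi(p)p^{-s})]^{-1}$, matching the $p$-factor of $L(s,\phi)L(s,\psi)$; this holds uniformly across split, inert, and ramified $p$, using $\phi(p)\psi(p)=\chi_K(p)$. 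For $p\mid f$ with $p\nmid f_0$, one has $p\mid f_1$, so $p$ divides both $\delta_1$ and $\delta_2$ by the structure of fundamental discriminants, giving $\phi(p)=\psi(p)=0$; the product in \eqref{mainequation} omits such $p$, so $E_p(s)=1$ has to be verified directly, as a consequence of the character-sum vanishing worked out in the next step.

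Fix now $p\mid f_0$ and write $e=\ord_p(f)$, $R_e=O_{f,p}$. By Lemma~\ref{properideal} the proper integral ideals of $R_e$ split into \emph{Type~A}: $\fa=p^c\epsilon R_e$ with $0\le c\le e-1$ and $\epsilon\in R_{e-c}^{\times}/R_e^{\times}$; and \emph{Type~B}: $\fa=p^e\alpha_0\beta R_e$, where $\alpha_0 R_0$ ranges over integral $R_0$-ideals of norm $p^d$ $(d\ge 0)$ and $\beta\in R_0^{\times}/R_e^{\times}$. Using $\chi=\chi_{\delta_1}\circ N_{K/\Q}$ adelically, for an idele supported only at $p$ with $N_{K/\Q}$ of $p$-adic order $d$ and unit part $u\in\Z_p^{\times}$, one gets $\chi(\cdot)=\chi_{\delta_1,p}(u)\prod_{q\ne p}\chi_{\delta_1,q}(p)^d$ after normalising by $p^{-d}\in\Q^{\times}$. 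Two character sums then drive the computation: for Type~A, $\sum_{\epsilon}\chi_{\delta_1,p}(N(\epsilon))$ equals $[R_{e-c}^{\times}:R_e^{\times}]=p^c$ whenever $\chi_{\delta_1,p}$ is trivial on $N(R_{e-c}^{\times})$, which by Lemma~\ref{fnorm} holds for every $c<e$; for Type~B, $\sum_{\beta}\chi_{\delta_1,p}(N(\beta))$ equals $[R_0^{\times}:R_e^{\times}]=p^{e-1}(p-\chi_K(p))$ when $\chi_{\delta_1,p}$ is trivial on $N(R_0^{\times})$, and vanishes otherwise; in the non-vanishing case the remaining sum $\sum_{\fa_0\subset R_0}\chi(\alpha_0)N(\fa_0)^{-s}$ reproduces the Euler factor $[(1-\phi(p)p^{-s})(1-\psi(p)p^{-s})]^{-1}$ already computed.

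Putting the pieces together, Type~A contributes $(1-p^{e(1-2s)})/(1-p^{1-2s})$ and Type~B, when it survives, contributes $p^{e-1}(p-\chi_K(p))p^{-2es}[(1-\phi(p)p^{-s})(1-\psi(p)p^{-s})]^{-1}$; the comparison with \eqref{mainequation} then reduces, in the generic case where $e=m_p$, to the polynomial identity $pA-(p-\chi_K(p))(1-p^{1-2s})=B$ with $A=(1-\phi(p)p^{-s})(1-\psi(p)p^{-s})$ and $B=(p^{1-s}-\phi(p))(p^{1-s}-\psi(p))$, a one-line check using $\phi(p)\psi(p)=\chi_K(p)$. The main obstacle is the exhaustive sub-case analysis according to whether $p$ is split, inert, or ramified in $K$ and whether $p$ divides one or both of $\delta_1,\delta_2$: when $p\mid\delta_i$ at unramified $p$ the Type~B sum vanishes, leaving a pure Type~A geometric sum to match a correction factor in which one or both of $\phi(p),\psi(p)$ are zero, and the arithmetic constraint $\ord_p(f_1)\le 1$ at odd $p$, forced by the fundamentality of $\delta_1,\delta_2$, is essential for matching the exponent. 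The $p=2$ case, branching into the four sub-cases of Lemma~\ref{fnorm}(2) with distinct local characters $\chi_{-4},\chi_8,\chi_{-8}$, is where the bookkeeping becomes heaviest.
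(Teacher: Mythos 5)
Your overall route is the same as the paper's: factor $L(s,O_f,\chi)$ into Euler factors, handle $p\nmid f$ via Theorem \ref{charactervalue}, classify the local proper ideals at $p\mid f$ by Lemma \ref{properideal} into the two shapes $p^c\epsilon R_e$ and $p^e\fa_0R_e$, evaluate the resulting unit character sums by orthogonality together with Lemma \ref{count}, and finish with an algebraic identity; your identity $pA-(p-\chi_K(p))(1-p^{1-2s})=B$ and your assembly of the case $m_p=e$ are correct.

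The genuine gap is your non-vanishing criterion for the Type~A sums: you assert that $\chi_{\delta_1,p}$ is trivial on $N(R_{e-c}^{\times})$ ``for every $c<e$'' by Lemma \ref{fnorm}. The correct cutoff is $c\le m_p$, not $c\le e-1$, and this cutoff is precisely what produces the exponent $m_p(1-2s)$ in \eqref{mainequation}. For odd $p$ the two cutoffs happen to coincide, because fundamentality of $\delta_1,\delta_2$ forces $\ord_p(f_1)\le 1$, hence $m_p\ge e-1$, and $N(R_b^{\times})=(\Z_p^{\times})^2$ for $b\ge 1$ kills every quadratic character. But for $p=2$ one can have $\ord_2(f_1)=2$ or $3$ (e.g.\ $\delta_1=8$, $\delta_2=-8$, $D_K=-4$, $f_1=4$, so $m_2=e-2$), and then at $c=e-1$ Lemma \ref{fnorm}(2)(ii) gives $N(R_1^{\times})=1+4\Z_2$, on which $\chi_{8,2}$ is \emph{not} trivial (since $\chi_8(5)=-1$); the character sum at $c=e-1$ therefore vanishes, while your formula would wrongly retain that term and yield an incorrect Euler factor at $2$. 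The same defect undermines your promised verification that $E_p(s)=1$ when $p\mid f$, $p\nmid f_0$ and $e\ge 2$: your criterion would leave the nontrivial geometric sum $1+2^{1-2s}+\cdots$. The paper avoids this by observing that $\chi$ descends to a genus character of $O_{f/p^c}$, equivalently is trivial on $I_c=\{\epsilon R_e:\epsilon\in R_{e-c}^{\times}\}$, if and only if $c\le e-\ord_p(f_1)=m_p$; that is the statement you need in place of ``$c<e$'', and with that replacement your argument goes through.
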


Before proving this, we prove
\begin{lemma}\label{vanish}
(1) Assume that $m_p<e$. Then we have 
$\phi(p)=\psi(p)=0$.
\\
(2) If $\phi(p)=\psi(p)=0$ and $p\nmid D_K$, then 
$m_p<e$.  
\end{lemma}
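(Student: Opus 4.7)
The strategy is to exploit the identity $\delta_1\delta_2=f_1^2D_K$ defining a reciprocal pair, together with the restricted $p$-adic valuations of fundamental discriminants. Recall that for odd $p$, only the prime discriminant $p^*=(-1)^{(p-1)/2}p$ is divisible by $p$, so every fundamental discriminant $\delta$ satisfies $\ord_p(\delta)\in\{0,1\}$, with $\chi_\delta(p)=0$ iff $\ord_p(\delta)=1$. For $p=2$ the only prime discriminants at $2$ are $-4,8,-8$, so $\ord_2(\delta)\in\{0,2,3\}$, with $\chi_\delta(2)=0$ iff $\ord_2(\delta)\geq 2$.

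For part (1), the hypothesis $m_p<e$ is equivalent to $\ord_p(f_1)\geq 1$, so
\[
\ord_p(\delta_1\delta_2)=2\ord_p(f_1)+\ord_p(D_K)\geq 2.
\]
When $p$ is odd, the bound $\ord_p(\delta_i)\leq 1$ immediately forces $\ord_p(\delta_1)=\ord_p(\delta_2)=1$, so $\phi(p)=\psi(p)=0$. When $p=2$, I would split on $\ord_2(D_K)$ and $\ord_2(f_1)$: in every case except $\ord_2(D_K)=0$ with $\ord_2(f_1)=1$, the sum $\ord_2(\delta_1)+\ord_2(\delta_2)$ is at least $4$, and the constraint that each summand lies in $\{0,2,3\}$ then forces both to be $\geq 2$, giving the conclusion.

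The delicate remaining sub-case is $p=2$ with $\ord_2(D_K)=0$ and $\ord_2(f_1)=1$; the plan is to show that no reciprocal pair exists here at all. Writing $f_1=2k$ with $k$ odd and assuming for contradiction $\ord_2(\delta_1)=2$, $\ord_2(\delta_2)=0$, one puts $\delta_1=4\epsilon m_1$ (with $\epsilon=\pm 1$ and $m_1$ odd squarefree) and $\delta_2=m_2$ odd. The fundamental-discriminant conditions for $\delta_1$ and $\delta_2$ force $\epsilon m_1\equiv 3\pmod 4$ and $m_2\equiv 1\pmod 4$, hence $\epsilon m_1m_2\equiv 3\pmod 4$. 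But $\epsilon m_1m_2=\delta_1\delta_2/4=k^2D_K$, which is $\equiv 1\pmod 4$ because $k$ is odd and $D_K$, being a product of prime discriminants $p^*\equiv 1\pmod 4$, is itself $\equiv 1\pmod 4$. This contradiction rules out the sub-case.

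Part (2) is the converse reading of the same identity: $\phi(p)=\psi(p)=0$ means $\ord_p(\delta_i)\geq 1$ for both $i$, giving $\ord_p(\delta_1\delta_2)\geq 2$ for odd $p$ and $\ord_2(\delta_1\delta_2)\geq 4$ for $p=2$. Combined with $\ord_p(D_K)=0$, the equation $\delta_1\delta_2=f_1^2D_K$ forces $\ord_p(f_1)\geq 1$ in both cases, so $m_p<e$. The main obstacle is the $p=2$, $\ord_2(D_K)=0$, $\ord_2(f_1)=1$ sub-case of part (1): ruling it out as genuinely empty is where the $\pmod 4$ congruence structure of odd prime discriminants is essential.
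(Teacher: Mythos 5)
Your proof is correct and follows essentially the same route as the paper: both rest on the valuation identity $\ord_p(\delta_1\delta_2)=2\ord_p(f_1)+\ord_p(D_K)$, the fact that fundamental discriminants have $p$-adic valuation in $\{0,1\}$ for odd $p$ and in $\{0,2,3\}$ for $p=2$, and the same $\bmod\ 4$ contradiction (odd fundamental discriminants are $\equiv 1 \bmod 4$ while $-(f_1/2)^2D_K\equiv 3\bmod 4$) to rule out the exceptional sub-case $\ord_2(D_K)=0$, $\ord_2(f_1)=1$. The only difference is organizational — you run a forward case split on $\ord_2(D_K)$ and $\ord_2(f_1)$ where the paper argues by contradiction from the assumption that only one $\delta_i$ is divisible by $p$ — but the mathematical content is identical.
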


\begin{proof}
(1) Since we assumed $\ord_p(f_1)=e-\ord_p(f_0)=e-m_p>0$, 
at least one of $\delta_1$ and $\delta_2$ is divisible by $p$. 
Assume that $\ord_p\delta_1>0$ and 
$\ord_p(\delta_2)=0$. Then we have 
\[
\ord_p(\delta_1)=2(e-m_p)+\ord_p(D_K).
\]
Since $e-m_p>0$, we have $\ord_p(\delta_1)\geq 2$, and 
since $\delta_1$ is a fundamental discriminant, we 
should have $p=2$. So $ord_2(\delta_1)=2$ or $3$,  
and $ord_2(D_K)=0$ or $1$ for each case. 
The latter cannot happen, so 
we have $ord_2(D_K)=0$, $ord_2(f_1)=1$, and 
$\ord_2(\delta_1)=2$. Here $-4$ is a prime discriminant 
dividing $\delta_1$, so if we write 
$\delta_1=(-4)\delta_0$, then 
$\delta_0$ is an odd fundamental discriminant.
So we have 
\[
\delta_0\delta_2=-(f_1/2)^2D_K.
\]
Here since $f_1/2$ is odd, RHS is 
$\equiv 3 \bmod 4$. This contradicts that 
$\delta_0$ and $\delta_2$ are 
odd fundamental discriminants.
This means that any $p|f$ divides both 
$\delta_1$ and $\delta_2$ if $0<e-m_p$. So (1) is proved.
\\
(2) By definition we have 
$\chi_K=\phi\psi$. Here the product is taken 
so that the result is primitive.
So if $\phi_p=\psi_p$ with $p\nmid D_K$, 
we are taking $\phi_p\psi_p=1$. 
So it might happen that $\phi_p(p)=\psi(p)=0$ and 
$\chi_K(p)\neq 0$ in general. 
Now in our setting, 
if $\phi(p)=\psi(p)=0$, that is, if 
$1\leq \ord_p(\delta_i)$ for both $i=1$, $2$, then since 
\[
2\leq \ord_p(\delta_1)+\ord_p(\delta_2)=2 \ord_p(f_1)+\ord_p(D_K),
\]
and $\ord_p(D_K)=0$ by our assumption, we have $1\leq 
\ord_p(f_1)=e-m_p$, so $1\leq e-m_p$. 
\end{proof}

\begin{proof}[Proof of Theorem \ref{main}]
Any genus character $\chi$ of $O_f$ regarded 
as a character of $K_A^{\times}$ is trivial 
on $R_e^{\times}$ for $e=\ord_p(f)$.  
By the construction, a genus character 
$\chi$ of $O_f$ associated with a pair ($\delta_1$, $\delta_2$) such that 
$\delta_1\delta_2=f_1^2D_K$ 
can be regarded as a genus character of 
$O_{f/p^c}$ for $c\leq e$ 
if and only if $\ord_p(f_1)\leq e-c$. 
In other words, if we denote by $I_c$ the group of 
fractional ideals of $R_e$ defined by  
\[
I_c=\{\epsilon R_e: \epsilon \in R_{e-c}^{\times}\},
\]
then $\chi$ is trivial on $I_c$ if and only if 
$c\leq e-\ord_p(f_1)=m_p$.
So if $m_p<e$. we have 
\begin{equation}\label{vanishcond}
\sum_{\fa \in I_c/R_e^{\times}}\chi(\fa)N(p^c\fa)^{-s}
=0 \qquad \text{ for all $c=m_p+1,\ldots,e$}
\end{equation}
since $\chi$ is not trivial on $I_c$ in these cases
and $N(p^c\fa)=p^{2c}$ for any ideal $\fa\in I_c$. 

By definition we have $0\leq m_p\leq e$. 
First we assume that $m_p\neq e$. 
Then by Lemma \ref{properideal} (2) and \eqref{vanishcond}, we have no 
contribution for $L(s,O_f,\chi)$ from ideals $\fa$ with 
$N(\fa)=p^c$ with $m_p<c/2$. Indeed, if $c\leq 2e$ and $c$ is odd,
there is no such ideal by Lemma \ref{properideal} (1). If $c\leq 2e$ and 
$c=2c_0$ is even, then the ideals run over 
$p^{c_0}I_{c_0}$ and since $m_p<c_0\leq e$, the contribution vanishes by \eqref{vanishcond}.
If $c=2e+c_0$ with $0\leq c_0$ and $m_p< e$, then the ideals run over 
$p^e\fa_0 I_e$ for several ideals $\fa_0$ of $R_0$ with $N(\fa_0)=p^c$. 
and again 
the contribution is $0$ by \eqref{vanishcond} since $m_p<e$ means $ord_p(f_1)>0$ and 
$\chi$ is non-trivial on $I_e/R_e^{\times}$. 
In particular, if $m_p=0$, then by Lemma \ref{vanish}, we have 
$\phi(p)=\psi(p)=0$ unless $e=0$, and Theorem \ref{main} 
\eqref{mainequation} becomes $1$.
Next, consider ideals $\fa$ such that $N(\fa)=p^{2c}$ with $c\leq m_p<e$. Then 
$\fa$ runs over $p^cI_{c}$, and $\chi$ is trivial on 
these ideals. So by Lemma \ref{properideal} (2),
the contribution of such ideals to the $L$-function 
is given by $[R_{e-c}^{\times}:R_e^{\times}]p^{-2cs}$, 
and by Lemma \ref{count}, the total contribution from 
$c\leq m_p$ is given by   
\begin{equation}\label{first}
1+p^{1-2s}+p^{2(1-2s)}+\cdots+p^{m_p(1-2s)}
=
\frac{1-p^{(1+m_p)(1-2s)}}{1-p^{1-2s}}.
\end{equation}
By Lemma \ref{vanish}, we have $\phi(p)=\psi(p)=0$ in this case.
So the $p$ Euler factor of Theorem \ref{main} \eqref{mainequation} coincides 
with the above \eqref{first}.
Next we assume that $m_p=e$. This means that 
$\chi$ is regarded as a genus character of $O_{f/p^e}$.
Then the character $\chi$ is trivial on ideals $\fa$ of 
norm up to $p^{2e}$ and this part is given by \eqref{first}. 
If $\fa=p^e\fa_0R_e$ with 
ideals $\fa_0$ of $R_0$ then the value of the 
character $\chi(\fa)=\chi(\fa_0R_e)$ 
is the same value for
the corresponding character $\chi$ on $Cl^{+}(O_{f/p^e})$.
By using Lemma \ref{count}, we see that the contribution of this part is given by 
\begin{equation}\label{second}
\frac{p^{e-1}(p-\chi_K(p))p^{-2es}}
{(1-\phi(p)p^{-s})(1-\psi(p)p^{-s})}.
\end{equation}
So summing up \eqref{first} and \eqref{second},
and noting $\phi_K(p)=\phi(p)\psi(p)$ for any prime $p$ 
by Lemma \ref{vanish} (2), 
we obtain \eqref{mainequation} of 
Theorem \ref{main}.  
\end{proof}

\section{The genus number in the wide sense.}
If we replace the definition $U_+(O)$ by 
$U(O)=U_{\infty}\prod_pO_p^{\times}$ in section 2, where 
$U_{\infty}=(\R^{\times})^{r_1}(\C^{\times})^{r_2}$,
then the genus in the wide sense is defined by 
taking $a \in K_A^{\times}$ such that $N(a)\in \Q^{\times}N(U(O))$ 
in the same way. (Since we are assuming $K$ is cyclic 
over $\Q$, actually we have $r_1=0$ or $r_2=0$.)
The genus theory in the wide sense for maximal orders 
is given in general setting in Furuta \cite{furuta}, for example.
Maybe the concrete genus numbers in the wide 
sense for orders of a quadratic field are well known but 
we give the formula as an appendix as a continuation of 
the previous sections.
Of course this is nothing but the number of cosets in the ideal class group
in the wide sense over the group of square classes.
We will give an application 
of this formula in the next section. 

We first prove results for maximal orders for 
the sake of simplicity, and then 
state the result for general quadratic orders.

\begin{prop}\label{wide}
Let $K$ be a quadratic field.
Let $t$ be the number of prime divisors of the fundamental
discriminant $D_K$ of $K$.
\\
(1) If $K$ is imaginary, or if $-1 \in N(K^{\times})$, then 
the genus number in the wide sense  
is the same as the genus number in the narrow sense and given by 
$2^{t-1}$.
\\
(2) If $K$ is real and $-1\not\in N(K^{\times})$, then 
the genus number in the wide sense is 
$2^{t-2}$.  
\\
(3) For a real quadratic field $K$, we have 
$-1\in N(K^{\times})$ if and only if all the odd prime divisors
of $D_K$ are $1\bmod 4$.
\end{prop}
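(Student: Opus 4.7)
The plan is to adapt the narrow-genus calculation of Section 2 verbatim, replacing $U_+(O)$ by $U(O)$. Since cyclicity and the Hasse norm theorem are the only inputs, the same reasoning gives
\[
g_{\text{wide}} \;=\; [\Q_A^\times : \Q^\times N(U(O))]/[K:\Q],
\]
and because $U_+(O)$ and $U(O)$ differ only at the archimedean factor, we get
\[
g_{\text{narrow}}/g_{\text{wide}} \;=\; [\Q^\times N(U(O)) : \Q^\times N(U_+(O))].
\]
For imaginary $K$ we have $U_\infty = U_{\infty,+}$ and this index is $1$. For real $K$, $N(U_\infty) = \R^\times$ while $N(U_{\infty,+}) = \R_+^\times$, so the quotient is represented by the idele $(-1)_\infty$ carrying $-1$ at infinity and $1$ elsewhere.

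Next I would show $(-1)_\infty \in \Q^\times N(U_+(O))$ iff $-1 \in N(K^\times)$. Writing $(-1)_\infty = \alpha v$ with $\alpha \in \Q^\times$ and $v \in N(U_+(O))$, the finite conditions $\alpha v_p = 1$ with $v_p \in \Z_p^\times$ force $\alpha = \pm 1$, and negativity at infinity forces $\alpha = -1$; such a decomposition exists iff $-1 \in N(O_{max,p}^\times)$ at every prime $p$. By case inspection of Lemma \ref{maxnorm} one has $N(K_p^\times) \cap \Z_p^\times = N(O_{max,p}^\times)$ at each ramified prime, so Hasse's norm theorem identifies this with $-1 \in N(K^\times)$. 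Parts (1) and (2) then follow by combining this with the specialization $g_{\text{narrow}} = 2^{t-1}$ of Weber's Proposition: for the three possible residues $D_K \equiv 1 \bmod 4$, $8 \bmod 16$, $12 \bmod 16$, respectively $\nu = t$ in the $2^{\nu-1}$ branch or $\nu = t-1$ in the $2^\nu$ branch, and both yield $2^{t-1}$.

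For (3), the condition $-1 \in N(K^\times)$ is local by Hasse. At infinite and unramified finite places it is automatic (the norm $\R\oplus\R\to\R$ is surjective; $\Z_p^\times \subseteq N(K_p^\times)$ for unramified $p$). At odd ramified $p$, Lemma \ref{maxnorm}(iii) yields $-1 \in N(K_p^\times) \Leftrightarrow -1 \in (\Z_p^\times)^2 \Leftrightarrow p \equiv 1 \bmod 4$. At $p=2$ ramified, Lemma \ref{maxnorm}(iv)-(v) shows $-1 \notin N(K_2^\times)$ precisely when $d \equiv 3 \bmod 4$ or $d = 2m$ with $m \equiv 3 \bmod 4$, and in either case the odd part of $d$ is $\equiv 3 \bmod 4$, forcing at least one odd prime divisor of $D_K$ to be $\equiv 3 \bmod 4$. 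Hence the $2$-adic obstruction is already subsumed by the odd-prime condition, and (3) follows.

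The main obstacle is the $p=2$ analysis: both verifying $N(K_2^\times) \cap \Z_2^\times = N(O_{max,2}^\times)$ and absorbing the $2$-adic obstruction into the odd-prime condition require walking through each residue class of $D_K$ modulo $32$ in Lemma \ref{maxnorm}. Every other step is routine manipulation with Hasse's theorem inside the adelic framework already set up in Section 2.
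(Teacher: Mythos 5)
Your proposal is correct and follows essentially the same route as the paper: reduce the wide/narrow discrepancy to whether the idele carrying $-1$ at infinity lies in $\Q^{\times}N(U_+(O))$, translate that into $-1\in N(O_{max,p}^{\times})$ for every $p$, identify this with $-1\in N(K^{\times})$ via the Hasse norm theorem, and settle part (3) by the local case analysis of Lemma \ref{maxnorm}, with the $2$-adic obstruction absorbed into the odd-prime condition exactly as in the paper. The only differences are cosmetic (you phrase the comparison as an index of $\Q^{\times}N(U(O))$ over $\Q^{\times}N(U_+(O))$ and make the $g_{\mathrm{narrow}}=2^{t-1}$ specialization of Weber's proposition explicit, which the paper leaves implicit).
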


The condition (3) above is equivalent to the condition that 
the genus field of $K$ in the narrow sense (the abelian extension of $K$ 
corresponding to the principal genus classes in the narrow sense) 
is real, that is, unramified at infinite places.
(The genus field of $K$ in the wide sense is always real for real $K$
by the class field theory.)
Note also that by (3), we always have $t \geq 2$ in (2).

\begin{proof}
The results (1) and (2) are essentially due to \cite{furuta}, but we 
reprove it here.
The claim is clear when $K$ is imaginary, so we assume $K$ is real.
The difference from the narrow sense comes from 
$N(U_{\infty})=\R^{\times}$ since this is not contained 
in $\R_+^{\times}$. So correcting this part by multiplying
by $-1\in \Q$, the genus number in the wide sense is given by the index
\[
\frac{1}{2}
\bigg[\prod_{p}\Z_p^{\times}:\prod_{p}N(O_{max,p}^{\times})\bigcup 
(-1)\prod_pN(O_{max,p}^{\times})\bigg].
\]
This number is the same as the genus number in the 
narrow sense if and only if $-1\in N(O_{max,p}^{\times})$ 
for all 
$p$, and half of it if $-1\not\in N(O_{max,p}^{\times})$ for some 
$p$. For a real field, the condition that $-1\in N(O_{max,p}^{\times})$ for all 
primes $p$ is equivalent to the condition $-1\in N(K)$. 
Indeed, if $-1$ is a norm at all local places, 
then the global element $-1\in \Q^{\times}$ is a norm of an element of $K^{\times}$
by the Hasse norm theorem. Conversely, if 
$N(c)=-1$ for $c \in K^{\times}$, then $c\in O_{max,p}$ if $K_p$ is a field and 
$-1\in N(O_{max,p}^{\times})$. If $K_p=\Q_p\oplus \Q_p$, then 
$-1\in \Z_p^{\times}=N(\Z_p^{\times}\times \Z_p^{\times})$ always. So (1) and 
(2) are proved.
Now more concrete condition is as follows.
If $p$ does not ramify, then $-1\in \Z_p^{\times}=N(O_{max,p}^{\times})$ always. 
If $p$ ramifies, then by 
Lemma \ref{maxnorm}, for odd $p$, we have $-1\in 
N(O_{max,p}^{\times})$ 
if and only if $p\equiv 1 \bmod 4$. For $p=2$, by Lemma 
\ref{maxnorm} (iv) and (v), we have 
$-1\in N(O_{max,2}^{\times})$ if and only if $8|D_K$ and 
$D_K/8\equiv 1 \bmod 4$. But since we assumed 
$D_K>0$, the condition that all odd $p|D_K$ satisfy 
$p\equiv 1 \bmod 4$ means that $D_K/8\equiv 1\bmod 4$.
\end{proof}

Note that $-1\in N(K^{\times})$ is much weaker than the 
existence of a unit $\epsilon\in O_{max}^{\times}$ with $N(\epsilon)=-1$.
For example, for $K=\Q(\sqrt{221})=\Q(\sqrt{13\cdot 17})$,
we have 
$N\left(\dfrac{5+\sqrt{221}}{14}\right)=-1$ but 
the fundamental unit $(15+\sqrt{221})/2$ of $K$ has norm $+1$.
In this case, if we put 
\[
\fc=\Z 7+\Z\frac{5+\sqrt{221}}{2},
\]
then 
\[
\fc^2=\left(\frac{5+\sqrt{221}}{2}\right)O_K.
\]
Here we have $N(\frac{5+\sqrt{221}}{2})=-49$,
so square classes in the wide sense are equal to 
square classes in the narrow sense.
The genus numbers in the narrow sense and in the 
wide sense are both equal to $2$.
The genus field for $K$ is $\Q(\sqrt{13},\sqrt{17})$. 
For general quadratic $K$, if $N(c)=-1$ for $c \in K^{\times}$, then
by using the prime ideal decomposition of $cO_K$,  
we can easily see that $cO_K=\fc^{1-\sigma}$ for an 
ideal $\fc$ that is a product of prime ideals splitting in $K$. 
This means that $\fc^2=cN(\fc)O_K$ and that the square classes 
in the narrow sense and in the wide sense are the same.

Finally, for a quadratic order $O_f$ of general conductor $f$, 
we have the following results.

\begin{prop}
Put $D=f^2D_K$. 
The genus number for $O_f$ in the narrow sense is 
equal to the genus number in the wide sense if and only if 
the following two conditions are satisfied.
\\
(1) $p\equiv 1 \bmod 4$ for all odd $p|D$. \\
(2)  $D\not\equiv 0 \bmod 16$. \\
Otherwise, the genus number in the narrow sense is 
$2$ times the one in the wide sense.
\end{prop}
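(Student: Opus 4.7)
The plan is to mimic the argument of Proposition~\ref{wide} (which handled $f=1$) while inserting the local norm groups $N(O_{f,p}^{\times})$ from Lemmas~\ref{maxnorm} and~\ref{fnorm}. First I would repeat the derivation of Section~2 with $U(O_f)$ in place of $U_+(O_f)$ to obtain $g_{\text{wide}} = [\Q_A^{\times}:\Q^{\times}N(U(O_f))]/[K:\Q]$, whence
\[
g_{\text{narrow}}/g_{\text{wide}} = [\Q^{\times}N(U(O_f)):\Q^{\times}N(U_+(O_f))]\in\{1,2\}.
\]
Then I would show this index is $1$ exactly when the idele $\iota=(-1,1,1,\ldots)$ (with $-1$ only at the archimedean place) lies in $\Q^{\times}N(U_+(O_f))$: writing $\iota=c\cdot r\cdot\prod_p u_p$ with $c\in\Q^{\times}$, $r\in\R_+^{\times}$ and $u_p\in N(O_{f,p}^{\times})\subset\Z_p^{\times}$, matching components at every finite prime forces $c\in\Z_p^{\times}$ for all $p$, so $c=\pm1$; the archimedean equation then forces $c=-1$ and $u_p=-1$ for all $p$. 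Hence the problem reduces to determining when $-1\in N(O_{f,p}^{\times})$ at every finite prime~$p$.

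For odd $p$, I would invoke Lemmas~\ref{maxnorm} and~\ref{fnorm}, which together give $N(O_{f,p}^{\times})=\Z_p^{\times}$ if $p\nmid D$ and $N(O_{f,p}^{\times})=(\Z_p^{\times})^2$ whenever $p\mid D$ (regardless of whether the divisibility comes from $D_K$ or from $f$). Thus $-1\in N(O_{f,p}^{\times})$ is automatic for $p\nmid D$, while for $p\mid D$ it is equivalent to $-1$ being a square in $\Z_p^{\times}$, i.e., to $p\equiv 1\bmod 4$. This is precisely condition~(1).

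For $p=2$ I would run through cases (iv)(v) of Lemma~\ref{maxnorm} and (i)--(iv) of Lemma~\ref{fnorm} and verify that $-1\in N(O_{f,2}^{\times})$ holds precisely in the three ranges $D$ odd, $D\equiv 4\bmod 16$, and $D\equiv 8\bmod 32$; equivalently, the $2$-adic obstruction occurs exactly when $D\equiv 12\bmod 16$, $D\equiv 24\bmod 32$, or $D\equiv 0\bmod 16$. A short check shows that in the first two cases one has $D_K=4m$ (resp.\ $8m$) with $m$ odd and $m\equiv 3\bmod 4$ and $f$ odd, so $m$ (hence $D$) carries an odd prime divisor $\equiv 3\bmod 4$ and condition~(1) already fails. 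Consequently, once condition~(1) is assumed, the only remaining $2$-adic obstruction is $D\equiv 0\bmod 16$, i.e., the negation of condition~(2).

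The hard part will be the $2$-adic case analysis, where one must cross-reference the several sub-cases of Lemmas~\ref{maxnorm} and~\ref{fnorm}, track which residues of $D$ modulo $16$ or $32$ arise in each, and verify that the pathological residues $12\bmod 16$ and $24\bmod 32$ are automatically absorbed by condition~(1). Once this bookkeeping is complete, conditions~(1) and~(2) together are equivalent to $-1$ being a local norm at every prime, which by the opening reduction is equivalent to $g_{\text{narrow}}=g_{\text{wide}}$; the complementary assertion $g_{\text{narrow}}=2\,g_{\text{wide}}$ follows since the index in question is at most~$2$.
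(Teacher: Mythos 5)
Your proposal is correct and is essentially the paper's own (omitted) argument: the paper says the proof is ``almost the same as the proof of Proposition~\ref{wide} by using Lemma~\ref{maxnorm} and~\ref{fnorm},'' which is exactly your route --- reduce $g_{\mathrm{narrow}}/g_{\mathrm{wide}}\in\{1,2\}$ to whether $-1\in N(O_{f,p}^{\times})$ at every finite prime, then read off the local conditions from the two lemmas, with the residues $12 \bmod 16$ and $24\bmod 32$ absorbed by condition (1). The only caveat, which applies equally to the statement itself, is that the argument implicitly assumes $K$ real (as in Proposition~\ref{wide}); for imaginary $K$ the two genus numbers coincide trivially regardless of the local behaviour of $-1$.
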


The proof is almost the same as the proof of 
Proposition \ref{wide} by using Lemma \ref{maxnorm} and 
\ref{fnorm}, so we omit it here. 

\section{Maximal orders of matrix algebras over algebraic number fields}
Let $F$ be an algebraic number field and $O_{max}=O_F$ be the ring of all integers of $F$.
A submodule $L$ of $F^n$ is said to be an $O_F$ lattice if 
it is finitely generated $O_F$ module and contains a basis of $F^n$.
A subring $\Lambda$ of $M_n(F)$ is said to be an $O_F$ order of $M_n(F)$ 
if it is an $O_F$ lattice in $M_n(F)$ and contains the unit matrix.
We denote by $F_+^{\times}$ the subgroup of elements of $F^{\times}$ which are positive under all real embeddings of $F$.
We define a subgroup $GL_n^+(F)$ of $GL_n(F)$ as 
\[
GL_n^+(F)=\{g\in GL_n(F):\det(g)\in F_+^{\times}\}.
\]
The number of $GL_n(F)$ conjugacy classes of maximal $O_F$ orders of $M_n(F)$
is called a type number of $M_n(F)$ 
(sometimes called in the wide sense in this paper). 
The similar number up to $GL_n^+(F)$ conjugacy classes will be called 
a type number in the narrow sense in this paper.
The purpose of this section is to characterize these numbers 
in terms of ideal classes. This has been known for type numbers in the wide sense
in \cite{helling} for $n=2$ and in \cite{arima} for general $n$.
We include this theory in the paper 
because when $n=2$ and $F$ is quadratic, these
two kinds of type numbers are given by 
the genus number in the wide sense and in the narrow sense, respectively.
The papers \cite{helling} and \cite{arima} use a global method but here 
we prove everything adelically. Most results below except for 
Propositions \ref{type} and \ref{typequadratic} are also found in \cite{reiner}.

We start from description of $O_F$ lattices $L\subset F^n$.
For any $g=(g_v) \in GL_n(F_A)$, we define $O_F^n g$ by 
\[
O_F^ng=\bigcap_{v<\infty}(O_{F,v}^ng_v\cap F^n).
\]
For any $O_F$ lattice $L$ and a finite place $v$ of $F$, 
we put $L_v=L\otimes_{O_F}O_{F,v}$. 
Then we have $L_v=O_v^n$ for almost all $v$ and it is clear that 
we have $L_v=O_v^ng_v$ for some $g_v\in GL_n(F_v)$ for all $v$. 
So any $O_F$ lattice is written as $O_F^ng$ for some $g \in G_A$.

For an $O_F$ lattice $L$, we write 
\[
\Lambda_L=\{g\in M_n(F); Lg\subset L\}
\]
and call it the right order of $L$.
Any maximal order $\Lambda$ of $M_n(F)$ is the right order of some $L$.
This is clear since $L\Lambda$ is again an $O_F$ lattice for any 
$O_F$ lattice $L$ and we have $\Lambda \subset \Lambda_{L\Lambda}$.
So any maximal order of $M_n(F)$ is written as 
\[
g^{-1}M_n(O_F)g:=\bigcap_{v<\infty}(g_v^{-1}M_n(O_{F,v})g_v\cap M_n(F))
\]
for some $g=(g_v) \in GL_n(F_A)$.

To write down global orbits of lattices and conjugacy classes
of maximal orders, we prepare adelic subgroups.

We denote by $GL_n^+(\R)$ the subgroup of elements of 
$GL_n(\R)$ with positive determinants.
We denote by $r_1$ and $r_2$ the number of real places and 
complex places of $F$. Put $U_{\infty}=GL_n(\R)^{r_1}\times GL_n(\C)^{r_2}$
and $U_{\infty,+}=GL_n^+(\R)^{r_1}\times GL_n(\C)^{r_2}$.
We put $U_0=\prod_{v<\infty}GL_n(O_{F,v})$ and 
$U=U_{\infty}U_0$ and $U_+=U_{\infty,+}U_0$. 
For ideal classes $C_i\in Cl(O_F)$ and $C_j^+ \in Cl^+(O_F)$, 
we fix representative ideles $a_i$ and $b_j$ in $K_A^{\times}$, respectively.
So we have  
\begin{align*} 
F_A^{\times}& =\bigsqcup_{i}a_iF^{\times}(\R^{\times})^{r_1}(\C^{\times})^{r_2}
\prod_{v<\infty}O_{F,v}^{\times} \quad (disjoint)\\
F_A^{\times}& = \bigsqcup_{j}b_jF^{\times}(\R_+^{\times})^{r_1}(\C^{\times})^{r_2}\prod_{v<\infty}O_{F,v}^{\times} \quad (disjoint).
\end{align*}
We define ideals $\fa_i$ and $\fb_i$ corresponding these by 
\[
\fa_i=\bigcap_{v<\infty}(a_{i,v}O_{F,v}\cap F),  
\quad 
\fb_j=\bigcap_{v<\infty}(b_{j,v}O_{F,v}\cap F).
\]
Here we may assume that infinite components 
$a_i$ and $b_i$ are all $1$.
We define diagonal matrices $g_i=diag(1,\ldots,1,a_i)$ and 
$h_j=diag(1,\ldots,1,b_j)$ in $GL_n(F_A)$. 
It is well known that we have the following double coset decomposition.
The proof is based on the strong approximation theorem on $SL_n$, and 
we omit the proof.
\begin{lemma}[\cite{kneser}]\label{class}
We have 
\begin{align*}
GL_n(F_A) & =\bigsqcup_i U g_i GL_n(F) \quad(disjoint),\\
GL_n(F_A) & = \bigsqcup_j U_+h_jGL_n(F) \quad (disjoint).
\end{align*}
\end{lemma}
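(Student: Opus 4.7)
The plan is to combine the determinant map $\det:GL_n(F_A)\to F_A^\times$, strong approximation for $SL_n$, and the structure theorem (Steinitz class) for finitely generated projective modules over the Dedekind domain $O_F$.

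First, I observe that $\det(U)=U(O_F)$ and $\det(GL_n(F))=F^\times$ (in the notation of Section~2), so $\det$ descends to a well-defined surjection from the double coset space $U\backslash GL_n(F_A)/GL_n(F)$ onto $U(O_F)F^\times\backslash F_A^\times=Cl(O_F)$; under this map, $g_i\mapsto[a_i]$. Since the $a_i$ exhaust $Cl(O_F)$ bijectively, the $g_i$ lie in pairwise distinct double cosets, giving disjointness. The analogous statement $h_j\mapsto[b_j]\in Cl^+(O_F)$ holds for the second decomposition.

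To show every $g\in GL_n(F_A)$ lies in \emph{some} $Ug_iGL_n(F)$, I pass to the lattice picture: $g$ determines the $O_F$-lattice $L:=O_F^ng\subset F^n$, and two elements give the same lattice iff they differ by left-multiplication by an element of $U$, so $U\backslash GL_n(F_A)/GL_n(F)$ is in natural bijection with $O_F$-isomorphism classes of such lattices. By the structure theorem, $L\cong O_F^{n-1}\oplus\fa$ for a unique Steinitz class $[\fa]\in Cl(O_F)$; choosing $i$ with $[\fa_i]=[\fa]$, the standard lattice $O_F^{n-1}\oplus\fa_i$ equals $O_F^ng_i$, and an $O_F$-isomorphism $L\to O_F^ng_i$ extends $F$-linearly to right-multiplication by some $c\in GL_n(F)$. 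Then $O_F^n(gc)=O_F^ng_i$ forces $g_vcg_{i,v}^{-1}\in GL_n(O_{F,v})$ at every finite $v$, while $g_i$ is trivial at infinity, so $gcg_i^{-1}\in U$ and $g\in Ug_iGL_n(F)$. The narrow version proceeds identically with a narrow (oriented) Steinitz class in $Cl^+(O_F)$; here strong approximation for $SL_n$ automatically lands in $U_+$, since $SL_n(F_\infty)\subset U_{\infty,+}$ (determinant $1$ is positive at each real place).

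The main technical obstacle is placing $c$ on the correct side of the representative. A direct strong-approximation argument—writing $gg_i^{-1}\in SL_n(F_A)=(U\cap SL_n(F_A))\cdot SL_n(F)$ and rearranging—naturally produces $c\in SL_n(F)$ on the \emph{left} of $g_i$, and since $g_i\notin GL_n(F)$ one cannot freely commute past it. Framing the argument via the module-theoretic dictionary is what makes the right-side placement automatic, and is the cleanest route to the disjoint union.
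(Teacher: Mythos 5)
The paper itself omits the proof of this lemma, saying only that it rests on strong approximation for $SL_n$ and citing Kneser, so there is no in-paper argument to compare yours against step by step. Your route---disjointness via the determinant map onto $Cl(O_F)$ resp.\ $Cl^+(O_F)$, and exhaustion via the dictionary between double cosets and $O_F$-lattices together with Steinitz's theorem---is a legitimate alternative to a direct strong-approximation computation, and for the first (wide) decomposition your argument is complete and correct: the identification of $U\backslash GL_n(F_A)/GL_n(F)$ with isomorphism classes of rank-$n$ $O_F$-lattices works precisely because $U_\infty=GL_n(F_\infty)$ swallows the unconstrained infinite components of $gcg_i^{-1}$.

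The narrow decomposition is where you have a genuine gap. You assert it ``proceeds identically with a narrow (oriented) Steinitz class,'' but the entire difficulty of the narrow case sits exactly at the step the wide argument waves through: now you must additionally arrange $\det\bigl((gc)_v\bigr)>0$ at every real place $v$, and the $O_F$-module isomorphism supplied by Steinitz gives no control over these signs. The only freedom left is to compose with an automorphism of $O_F^{n-1}\oplus\fb_j$, whose determinants realize only $\mathrm{sgn}(O_F^{\times})$, which is a proper subgroup of $\{\pm1\}^{r_1}$ in general (for $F=\Q(\sqrt{3})$ one gets only $\{(+,+),(-,-)\}$---this is exactly the paper's closing example of two maximal orders conjugate under $GL_2(K)$ but not $GL_2^+(K)$). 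Your parenthetical appeal to strong approximation does not attach to anything here, since the module-theoretic argument never invokes it, and---as you yourself observe in your last paragraph---the direct strong-approximation argument puts the rational element on the wrong side of $h_j$. The fix is short but must be said: from the wide case write $g\in U g_i\,GL_n(F)$ and split $U=U_+\Sigma$, where $\Sigma$ consists of the matrices equal to $\mathrm{diag}(1,\dots,1,\pm1)$ at the real places and to $1$ elsewhere; since $\Sigma$ commutes with $g_i$ (both are diagonal and $g_i$ is trivial at infinity), $g\in U_+\,g_i\epsilon\,GL_n(F)$ for some $\epsilon\in\Sigma$. Now choose $j$ with $\det(g_i\epsilon)\in F^{\times}U_+(O_F)\,b_j$; because $g_i\epsilon$, $h_j$, and the elements witnessing this scalar identity are all of the form $\mathrm{diag}(1,\dots,1,*)$, the identity lifts to $g_i\epsilon\in U_+\,h_j\,GL_n(F)$. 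With that inserted, your proof is complete.
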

If we denote by $GL_n^+(F_A)$ the subgroup of elements of $GL_n(F_A)$
whose infinite components are in $U_{\infty,+}$.
Then, since $F^{\times}$ contains an element with arbitrary sign at real places,
we also have 
\[
GL_n^+(F_A)=\bigsqcup_j U_+ h_jGL_n^+(F)
\quad (disjoint).
\]
It is easy to see that 
any $g\in GL_n(F_A)$ belongs to the double coset of $g_i$ if and only if 
$\det(g)$ belongs to the ideal class of $\fa_i$ and 
any $g \in GL_n^+(F_A)$ belongs to $h_j$ if and only if $\det(g)$ 
belongs to the ideal class of $\fb_j$.

In terms of global lattices, Lemma \ref{class} is written as follows.
\begin{lemma}\label{classglobal}
We fix an $O_F$ lattice $L$. \\
(1) There exists the unique ideal class $C_i\in Cl(O_F)$ such that 
the $GL_n(F)$ orbit of $L$ contains $(O_F,\ldots,O_F,\fa_i)$ for $\fa_i\in C_i$. \\ 
(2) There exists the unique ideal class $C_j^+\in Cl^+(O_F)$ such that 
the $GL_n^+(F)$ orbit of $L$ contains 
$(O_f,\ldots,O_F,\fb_j)$ for $\fb_j\in C_j^+$.
\end{lemma}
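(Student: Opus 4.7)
The plan is to use the adelic description $L = O_F^n g$ with $g \in GL_n(F_A)$ (established in the paragraph preceding Lemma \ref{class}) together with the double-coset decompositions of Lemma \ref{class}.

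First I would treat existence. Given $L$, pick $g = (g_v) \in GL_n(F_A)$ so that $L_v = O_{F,v}^n g_v$ at every finite $v$; normalize the infinite components of $g$ to the identity, so that in particular $g \in GL_n^+(F_A)$. For part (1), decompose $g = u g_i \gamma$ with $u \in U$ and $\gamma \in GL_n(F)$. The subgroup $U = U_\infty U_0$ stabilizes $O_F^n$: its finite part $U_0 = \prod_{v<\infty} GL_n(O_{F,v})$ preserves $O_{F,v}^n$ place by place, and the archimedean part has no effect on the finite-place intersections that define $O_F^n$. Hence $L = O_F^n g = O_F^n g_i \gamma$, so $L \gamma^{-1} = O_F^n g_i$, which by the choice of $g_i = \operatorname{diag}(1,\ldots,1,a_i)$ is exactly $(O_F,\ldots,O_F,\fa_i)$. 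For part (2), I would apply the decomposition $g = u_+ h_j \gamma$ with $u_+ \in U_+$ and $\gamma \in GL_n^+(F)$ (available since $g \in GL_n^+(F_A)$, using the remark just after Lemma \ref{class}) and conclude identically.

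For uniqueness I would use the Steinitz invariant: to an $O_F$-lattice $M \subset F^n$ attach the fractional $O_F$-ideal $\det(M) := \bigwedge^n M \subset \bigwedge^n F^n = F$. A short local calculation gives $\det((O_F,\ldots,O_F,\fa)) = \fa$ and $\det(M\gamma) = \det(\gamma)\cdot \det(M)$ for $\gamma \in GL_n(F)$. Therefore the class of $\det(M)$ is a $GL_n(F)$-orbit invariant in $Cl(O_F)$, forcing uniqueness of $C_i$ in (1); restricting to $\gamma \in GL_n^+(F)$ puts $\det(\gamma) \in F_+^{\times}$ and upgrades the invariant to $Cl^+(O_F)$, yielding uniqueness of $C_j^+$ in (2).

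The main obstacle is essentially bookkeeping: everything reduces to Lemma \ref{class} plus the two routine facts (stability of $O_F^n$ under $U$ respectively $U_+$, and the Steinitz/determinant computation). The one point genuinely requiring care is the normalization of the infinite components of $g$ in the first step so that the $U_+$-decomposition actually produces $\gamma \in GL_n^+(F)$; without this one would only obtain $\gamma \in GL_n(F)$ and part (2) would fail.
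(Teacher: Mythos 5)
Your proof is correct and follows essentially the same route as the paper: the paper presents this lemma as the lattice-language translation of the double-coset decomposition of Lemma \ref{class}, with uniqueness coming from the observation (stated just before the lemma) that the double coset of $g$ is detected by the ideal class of $\det(g)$ --- which is exactly your Steinitz-invariant argument in global form. Your explicit attention to normalizing the archimedean components so that the $U_+$-decomposition yields $\gamma\in GL_n^+(F)$ is a worthwhile detail the paper leaves implicit.
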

So any maximal order of $M_n(F)$ is $GL_n^+(F)$ conjugate to 
\[
\Lambda(\fa)=\begin{pmatrix} O_F & \cdots & O_F &\fa \\
\vdots & \cdots & \vdots & \vdots \\
O_F & \cdots & O_F & \fa \\
\fa^{-1} & \cdots & \fa^{-1} & O_F
\end{pmatrix}
\]
for some ideal $\fa$ of $O_F$. 
(This is well known up to $GL_n(F)$ conjugacy. See \cite{reiner} for example.)
We note that when $n=1$, we have $\Lambda(\fa)=O_F$ by definition.

Next problem is to describe dependence of $\Lambda(\fa)$ on $\fa$.
The relation $g^{-1}M_n(O)g=M_n(O)$ for $g=(g_v) 
\in GL_n(F_A)$ means 
that $M_n(O_{F,v})g_v$ $(v<\infty)$ is a two sided ideal of $M_n(O_{F,v})$.
It is well known that any two sided ideal is written as 
$c_vM_n(O_{F,v})$ for some $c_v\in F_v^{\times}$
(See \cite{reiner}).  
Now take $k_1$, $k_2\in GL_n^+(F_A)$ and assume that  
$g_0^{-1}k_1^{-1}M_n(O)k_1g_0=k_2^{-1}M_n(O)k_2$ for some 
$g_0\in GL_n^+(F)$. 
Assume that $k_1$ and $k_2$ belong to the double cosets of $h_i$ and $h_j$
in Lemma \ref{class}, respectively. Then by 
the above consideration, there exists 
$c\in F_A^{\times}$ whose infinite 
components are $1$ such that 
\[
U_+h_iGL_n^+(F)=U_+ch_jGL_n^+(F).
\]
This means that $\det(ch_j)=c^nb_j$ belongs to the narrow class of $b_i$.
The argument for conjugacy classes 
with respect to $GL_n(F)$ is similar. So we have
\begin{prop}\label{type}
(1) $\Lambda(\fa_i)$ and $\Lambda(\fa_j)$ are $GL_n(F)$ conjugate if and only if 
$\fa_i$ and $\fa_j\fc^n$ belong to the same ideal class in the wide sense
for some fractional ideal $\fc$ of $O_F$. \\
(2) $\Lambda(\fb_i)$ and $\Lambda(\fb_j)$ are $GL_n^+(F)$ conjugate 
if and only if $\fb_i$ and $\fb_j\fc^n$ belong to the same ideal class 
in the narrow sense for some fractional ideal $\fc$ of $O_F$.
\end{prop}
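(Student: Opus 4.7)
My plan is to reduce both assertions to a normalizer computation for $M_n(O_F)$ inside $GL_n(F_A)$, and then to read off the ideal-class information by taking determinants. The excerpt has already carried out most of the forward direction of (2), so my job is to supply the normalizer step, the converse direction, and the wide-sense version.

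First I would record the normalizer: at every finite place $v$, an element $g_v\in GL_n(F_v)$ normalizes $M_n(O_{F,v})$ if and only if $M_n(O_{F,v})g_v$ is a two-sided $M_n(O_{F,v})$-ideal, and hence equals $c_v M_n(O_{F,v})$ for some $c_v\in F_v^\times$ by the two-sided ideal classification from \cite{reiner} already invoked in the excerpt. Consequently the adelic normalizer of $M_n(O_F)$ in $GL_n(F_A)$ is $F_A^\times U_0$. With this in hand, the argument just before the proposition gives the forward direction of (2): writing $\Lambda(\fb_\ell)=h_\ell^{-1}M_n(O_F)h_\ell$ and assuming $g_0^{-1}\Lambda(\fb_i)g_0=\Lambda(\fb_j)$ with $g_0\in GL_n^+(F)$, the element $h_ig_0 h_j^{-1}$ normalizes $M_n(O_F)$, hence has the form $cu$ with $c\in F_A^\times$ of trivial infinite components and $u\in U_+$; taking determinants in the identity $U_+h_iGL_n^+(F)=U_+ch_jGL_n^+(F)$ yields that $\fb_i$ and $\fb_j\fc^n$ lie in the same narrow class, where $\fc$ is the fractional ideal associated with $c$.

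For the converse, given $\alpha\in F_+^\times$ and a fractional ideal $\fc$ with $\fb_i=\alpha\,\fb_j\,\fc^n$, I would set $g_0=\mathrm{diag}(1,\ldots,1,\alpha)\in GL_n^+(F)$ and choose an idele $c$ representing $\fc$ with trivial infinite components; a direct place-by-place matrix check, using the explicit block form of $\Lambda(\fa)$ together with the diagonal shape of the $h_\ell$, then verifies $g_0^{-1}\Lambda(\fb_i)g_0=\Lambda(\fb_j)$. Statement (1) is proved by the identical argument after replacing $(U_+,GL_n^+(F),h_\ell,\fb_\ell)$ by $(U,GL_n(F),g_\ell,\fa_\ell)$ throughout; removing the positivity condition at real places only simplifies the bookkeeping.

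The main obstacle is the archimedean bookkeeping in the narrow case: one must verify that when $h_ig_0 h_j^{-1}$ is decomposed through the normalizer as $cu$, the idele $c$ can genuinely be chosen with trivial infinite components (the totally positive real part being absorbed into $u\in U_+$), so that the determinant relation $c^n b_j\equiv b_i$ records the correct narrow class without losing any sign at a real place. Everything else is routine adelic manipulation together with the normalizer computation.
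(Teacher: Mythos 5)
Your forward direction coincides with the paper's argument: the paper likewise reduces conjugacy to the statement that $M_n(O_{F,v})g_v$ is a two-sided ideal, hence of the form $c_vM_n(O_{F,v})$, and then reads off the narrow (resp.\ wide) class of the determinant through the double coset decomposition of Lemma \ref{class}. Your worry about the archimedean components is easily dispatched: the normalizer condition constrains only the finite places, and since $k_1,g_0,k_2$ all have infinite components in $U_{\infty,+}$, the infinite part of $k_1g_0k_2^{-1}$ sits in $U_{\infty,+}\subset U_+$, so $c$ may indeed be taken with infinite components $1$.

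The genuine problem is your converse. With $g_0=\mathrm{diag}(1,\ldots,1,\alpha)$ one computes directly that $g_0^{-1}\Lambda(\fb_i)g_0=\Lambda(\alpha\fb_i)$ (the last column of the block matrix is multiplied by $\alpha$ and the last row by $\alpha^{-1}$), and under your hypothesis $\fb_i=\alpha\fb_j\fc^n$ this is $\Lambda(\alpha^2\fb_j\fc^n)$, not $\Lambda(\fb_j)$; even with the sign of the exponent of $\alpha$ adjusted you are left with $\Lambda(\fb_j\fc^n)$, and no explicit diagonal global matrix will strip off the factor $\fc^n$ when $\fc$ is non-principal. So the claimed identity $g_0^{-1}\Lambda(\fb_i)g_0=\Lambda(\fb_j)$ is false in general, and a ``direct place-by-place matrix check'' cannot close this gap. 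The correct converse runs the forward argument backwards through Lemma \ref{class}: choose an idele $c$ with trivial infinite components representing $\fc$; since $c$ is central it acts trivially by conjugation, so $\Lambda(\fb_j)=h_j^{-1}M_n(O_F)h_j=(ch_j)^{-1}M_n(O_F)(ch_j)$, and because $\det(ch_j)=c^nb_j$ lies in the narrow class of $b_i$, the element $ch_j$ lies in $U_+h_iGL_n^+(F)$, say $ch_j=uh_ig_0$ with $u\in U_+$ and $g_0\in GL_n^+(F)$; as $U_+$ normalizes $M_n(O_F)$ this yields $\Lambda(\fb_j)=g_0^{-1}\Lambda(\fb_i)g_0$. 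The paper compresses exactly this reversibility into the phrase that the argument is ``similar''; your write-up should replace the explicit diagonal conjugation by this double-coset step, after which part (1) follows by the same substitution you indicate.
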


Of course the type numbers for both cases are the orders of 
ideal class groups $Cl(O_F)$ and $Cl^+(O_F)$ divided by 
$n$-th power classes, respectively. 
In particular, when $n=1$, then the type numbers 
are one. When $n=2$ and $F$ is quadratic over $\Q$,
these are genus numbers.  

\begin{prop}\label{typequadratic}
When $K$ is a quadratic field over $\Q$, 
the number of maximal orders of $M_2(K)$ up to $GL_2(K)$ conjugation
is equal to the genus number in the wide sense, and 
up to $GL_2^+(K)$ conjugation is equal to the genus number in the narrow sense
\end{prop}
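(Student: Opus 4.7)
The plan is to combine Proposition \ref{type} in the case $n=2$ with the identification, already established in Sections 3 and 5, of each genus number with the index of the subgroup of square classes in the appropriate ideal class group. Specializing Proposition \ref{type}(2) to $n=2$ and $F=K$ shows that two maximal orders $\Lambda(\fb_i)$ and $\Lambda(\fb_j)$ of $M_2(K)$ are $GL_2^+(K)$ conjugate if and only if $\fb_i\fb_j^{-1}$ lies in the square subgroup of $Cl^+(O_K)$. Hence the number of $GL_2^+(K)$ conjugacy classes of maximal orders equals $[Cl^+(O_K):(Cl^+(O_K))^2]$, and similarly Proposition \ref{type}(1) gives that the number of $GL_2(K)$ conjugacy classes equals $[Cl(O_K):Cl(O_K)^2]$.

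Next I would identify each of these indices with the corresponding genus number of $O_K$. In the narrow sense, this is essentially the discussion in Section 3 following Lemma \ref{locallyprincipal}: by Proposition \ref{iyanagatamagawa}, every $a\in H(O_K)$ can be written as $a = b^{1-\sigma}uc$ with $u \in U_+(O_K)$ and $c\in K^\times$, and since $bb^\sigma \in \Q_A^\times \subset K^\times U_+(O_K)$, the class of $a$ in $Cl^+(O_K)$ coincides with that of $b^2$. Therefore the principal genus in the narrow sense is exactly $(Cl^+(O_K))^2$ and the genus number in the narrow sense is $[Cl^+(O_K):(Cl^+(O_K))^2]$. For the wide sense, the same argument with $U(O_K)$ in place of $U_+(O_K)$, together with the evident inclusion $\Q_A^\times \subset K^\times U(O_K)$, identifies the principal genus in the wide sense with $Cl(O_K)^2$; this is exactly the remark made at the start of Section 5. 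Comparing with the two type-number formulas from the preceding paragraph yields the proposition.

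There is no real obstacle here: Proposition \ref{type} is the substantive input, and once its specialization to $n=2$ is coupled with the adelic identification of the principal genus with the subgroup of square classes in the respective senses, the result is immediate. The only point requiring mild care is to check that the argument in Section 3 identifying the principal genus with squares, which is phrased in the narrow sense for general orders $O_f$, transfers verbatim to the wide sense and to the maximal order $O_K$; this amounts to verifying the inclusion $\Q_A^\times \subset K^\times U(O_K)$, which is clear from $\Q^\times \subset K^\times$, $\R_+^\times \subset U_\infty$, and $\prod_p \Z_p^\times \subset \prod_p O_{K,p}^\times$.
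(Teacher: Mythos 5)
Your proposal is correct and follows essentially the same route as the paper: the paper deduces the proposition directly from Proposition \ref{type} via the remark that the type numbers are the orders of $Cl(O_F)$ and $Cl^+(O_F)$ modulo $n$-th power classes, which for $n=2$ and $K$ quadratic are exactly the genus numbers in the wide and narrow sense. You have merely spelled out the identification of the principal genus with the square classes (done in Section 3 for the narrow sense and asserted at the start of Section 5 for the wide sense), which the paper leaves implicit.
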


Example: When $K=\Q(\sqrt{3})$ then 
the genus number in the wide sense is $1$, while the one in the narrow sense
is $2$. Representatives of maximal orders up to $GL_2^+(K)$ conjugacy classes 
are given by 
\[
M_2(O_K) \quad \text{and} \quad 
\begin{pmatrix} O_K & \sqrt{3}O_K \\
(\sqrt{3})^{-1}O_K & O_K \end{pmatrix}.
\]
These are conjugate by $\begin{pmatrix} 1 & 0 \\ 0 & \sqrt{3}\end{pmatrix}$
but not conjugate by any element of $GL_2^+(K)$.

\end{document}